%% Version: October 2018

\documentclass[reqno,11pt]{amsart}

\usepackage{amsmath,amssymb,amsthm}
\usepackage{tikz,hyperref}
\usepackage{eucal,mathrsfs,bbm,enumerate}

\usetikzlibrary{shapes,arrows}
\usetikzlibrary{decorations.pathreplacing}
\usetikzlibrary{decorations.markings}

\evensidemargin20pt
\oddsidemargin20pt
\textwidth6in

\newtheorem{theorem}{Theorem}
\newtheorem{lemma}[theorem]{Lemma}
\newtheorem{proposition}[theorem]{Proposition}
\newtheorem{corollary}[theorem]{Corollary}

\theoremstyle{definition}

\newtheorem{example}[theorem]{Example}
\newtheorem*{remark}{Remark}

\def\T{\mathscr{T}}
\def\Y{\mathscr{Y}}

\title{A family of Bell transformations}

\author[Birmajer]{Daniel Birmajer}
\address{Nazareth College, 4245 East Ave., Rochester, NY 14618}
\email{abirmaj6@naz.edu}

\author[Gil]{Juan B. Gil}
\author[Weiner]{Michael D. Weiner}

\address{Penn State Altoona, 3000 Ivyside Park, Altoona, PA 16601}
\email{jgil@psu.edu}
\email{mdw8@psu.edu}

%%%%%%%%%%%%%%%%%%%%%%%%%%%%%%%%%%%%%%%%%%%%
%%%%%%%%%%%%%%%%%%%%%%%%%%%%%%%%%%%%%%%%%%%%
\begin{document}

\begin{abstract}
We introduce a family of sequence transformations, defined via partial Bell polynomials, that may be used for a systematic study of a wide variety of problems in enumerative combinatorics. This family includes some of the  transformations listed in the paper by Bernstein \& Sloane, now seen as transformations under the umbrella of partial Bell polynomials. Our goal is to describe these transformations from the algebraic and combinatorial points of view. We provide functional equations satisfied by the generating functions, derive inverse relations, and give a convolution formula. While the full range of applications remains unexplored, in this paper we show a glimpse of the versatility of Bell transformations by discussing the enumeration of several combinatorial configurations, including rational Dyck paths, rooted planar maps, and certain classes of permutations. 
\end{abstract}

\maketitle

%%%%%%%%%%%%%%%%%%%%%%%%%%%%%%%%%%%%%%%%%%%%
\section{Introduction}
\label{sec:intro}
%%%%%%%%%%%%%%%%%%%%%%%%%%%%%%%%%%%%%%%%%%%%

Aiming at developing a unifying approach for a variety of enumeration problems, and in the spirit of the work by E.~T.~Bell on partition polynomials, in this paper we introduce a family of sequence transformations defined via partial Bell polynomials.

Let $a$, $b$, $c$, $d$ be fixed numbers. Given a sequence $x=(x_n)_{n\in\mathbb{N}}$, we let $y=\Y_{a,b,c,d}(x)$ be the sequence defined by
\begin{equation} \label{eq:transform}
   y_n=\sum_{k=1}^n  \frac{1}{n!}
   \bigg[\prod_{j=1}^{k-1}{(an+bk+cj+d)}\bigg] B_{n,k}(1!x_1,2!x_2,\dots) \text{ for } n\ge 1,
\end{equation}
where $B_{n,k}$ denotes the $(n, k)$-th (exponential) partial Bell polynomial. We call $\Y_{a,b,c,d}(x)$ the {\sc Bell} transform of $x$ with parameters $(a,b,c,d)$.

For $k=0,1,2,\dots$, the polynomials $B_{n,k}(z_1,\dots,z_{n-k+1})$ may be defined through the series expansion
\begin{equation*}
 \frac{1}{k!} \bigg(\sum_{j=1}^\infty z_j\, \frac{t^j}{j!}\bigg)^k 
 = \sum_{n=k}^\infty B_{n,k}(z_1,z_2,\dots)\, \frac{t^n}{n!}.
\end{equation*}
These polynomials are homogeneous of degree $k$, of weight $n$, and they can be written as
\begin{equation*}
 B_{n,k}(z_1,\dots, z_{n-k+1})
      = \!\!\sum_{\alpha\in\pi(n,k)}\frac{n!}{\alpha_1!\alpha_2!\cdots}\left(\frac{z_1}{1!}\right)^{\alpha_1}
      \left(\frac{z_2}{2!}\right)^{\alpha_2}\cdots,
\end{equation*}
where $\pi(n,k)$ denotes the set of multi-indices $\alpha\in\mathbb{N}_0^{n-k+1}$ such that
\[  \alpha_1+\alpha_2+\dots=k \;\text{ and }\; \alpha_1+2\alpha_2+3\alpha_3+\dots=n. \]
Note that $B_{n,k}$ contains as many monomials as the number of partitions of $[n]=\{1,\dots,n\}$ into $k$ parts. Thus, if $x$ enumerates some class of building blocks (with $x_j$ distinct blocks of type $j$), then the sequence $\Y_{a,b,c,d}(x)$ counts the number of objects that can be made from these building blocks by placing them (according to their type) on a set of partitions induced by the parameters $(a,b,c,d)$. Moreover, the term
\[ \frac{1}{n!} \bigg[\prod_{j=1}^{k-1}{(an+bk+cj+d)}\bigg] B_{n,k}(1!x_1,2!x_2,\dots) \] 
gives the number of such objects made with exactly $k$ blocks. For example, if the induced partitions consist of interval blocks, then the set of resulting objects of length $n$ made with $k$ such blocks is given by
\begin{equation} \label{eq:compositions}
 \frac{k!}{n!} B_{n,k}(1!x_1,2!x_2,\dots). 
\end{equation}
This corresponds to $(a,b,c,d) = (0,1,-1,1)$. Moreover, the sum over $k$ from $1$ to $n$ gives the known {\sc invert} transform of $x$, see e.g.\ \cite{Bei85,BS95, Cameron89}, and \eqref{eq:compositions} may be interpreted as the number of colored compositions of $n$ with $k$ parts, where part $j$ comes in $x_j$ different colors. 

Another special case of broad interest is the {\sc noncrossing partition} transform, introduced by Beissinger in \cite{Bei85} and systematically studied by Callan in \cite{Callan08}. It corresponds to the parameters $(a,b,c,d) = (1,0,-1,1)$, giving $\prod_{j=1}^{k-1}{(an+bk+cj+d)} = \frac{n!}{(n-k+1)!}$. In this case, \eqref{eq:transform} becomes 
\begin{equation} \label{eq:NCtransform}
   y_n=\sum_{k=1}^n  \frac{1}{(n-k+1)!} B_{n,k}(1!x_1,2!x_2,\dots),
\end{equation}
which counts the configurations obtained by placing the building blocks enumerated by $x$ on top of the noncrossing partitions of $[n]$. In particular, if $x$ is the sequence of ones, denoted by $\mathbbm{1}=(1,1,\dots)$, then 
\begin{align*} \label{eq:NCtransform}
   y_n &=\sum_{k=1}^n  \frac{1}{(n-k+1)!} B_{n,k}(1!,2!,\dots) \\
   &= \sum_{k=1}^n  \frac{1}{(n-k+1)!} \frac{n!}{k!}\binom{n-1}{k-1} \\
   &= \sum_{k=1}^n  \frac{1}{n} \binom{n}{n-k}\binom{n}{k-1} 
   = \frac{1}{n}\binom{2n}{n-1} = \frac{1}{n+1}\binom{2n}{n}.
\end{align*}
Thus $\Y_{1,0,-1,1}(\mathbbm{1})$ is the sequence of Catalan numbers that enumerates noncrossing partitions, Dyck paths, rooted trees, and many others combinatorial objects (see e.g.\ \cite{Sta15}).

The family $\Y_{a,b,c,d}$ also includes several of the known transformations studied by Bernstein and Sloane \cite{BS95}. For example, {\sc exp}, {\sc revert}, and {\sc conv} can be realized as instances of {\sc Bell}. 

Our goal is to study $\Y_{a,b,c,d}$ from the algebraic and combinatorial points of view. In Section~\ref{sec:inverse} we give explicit formulas for the inverse $\Y_{a,b,c,d}^{-1}$. In Section~\ref{sec:gf}, we provide equivalent forms of \eqref{eq:transform} in terms of generating functions. The results in these two sections are obtained using Lagrange inversion together with certain interpolating properties of the partial Bell polynomials proved in \cite{BGW12}. 

In Section~\ref{sec:convolution} we prove a convolution formula for {\sc Bell} transforms of the form $\Y_{a,b,c,1}$, and we give a recurrence relation for $\Y_{a,b,-1,1}$ assuming that $a$ and $b$ are not both equal to $0$. Section~\ref{sec:examples} is a compilation of basic examples that showcase special instances of $\Y_{a,b,c,d}$.
 
In Section~\ref{sec:applications} we discuss combinatorial applications and give some examples that illustrate how $\Y_{a,b,c,d}$ may be used to link the enumeration of certain classes of combinatorial structures with the enumeration of building blocks that serve as ``primitive elements'' within each class. In that section, we will focus on rational Dyck paths, rooted planar maps, and certain classes of permutations. For a general approach to the enumeration of irreducible combinatorial objects, we refer to the work by Beissinger \cite{Bei85}.

It is not surprising that the results of this paper heavily rely on properties of the partial Bell polynomials. For a list of basic properties and combinatorial identities, we refer the reader to \cite{Bell, Charalambides, Comtet, Cvijovic, Eger15, MS08a, MS08b, Mi08, WW09} and the references therein. In an attempt to make the paper more readable and as self-contained as possible, we have included an appendix with the main technical results on partial Bell polynomials used for our arguments.

%%%%%%%%%%%%%%%%%%%%%%%%%%%%%%%%%%%%%%%%%%%%
\section{Inverse relations}
\label{sec:inverse}
%%%%%%%%%%%%%%%%%%%%%%%%%%%%%%%%%%%%%%%%%%%%
We start by proving an interpolating relation for $y=\Y_{a,b,c,d}(x)$, which provides a generalization of Theorem~15 in \cite{BGW12}.
 
\begin{theorem} \label{thm:lambda_general}
Let $x=(x_n)_{n\in\mathbb{N}}$ and let $y=\Y_{a,b,c,d}(x)$, that is,
\begin{equation*}
   y_n=\sum_{k=1}^n  \frac{1}{n!}
   \bigg[\prod_{j=1}^{k-1}(an+bk+cj+d)\bigg] B_{n,k}(1!x_1,2!x_2,\dots) \text{ for } n\ge 1.
\end{equation*}
Assume $c\not=0$. Then, for every $n\in\mathbb{N}$ and for any $\lambda\in\mathbb{C}$, we have
\begin{equation}\label{eq:lambda_general}
\sum_{k=1}^n \bigg[\prod_{j=1}^{k-1}(\lambda-dj+d)\bigg]B_{n,k}(!y)
=\sum_{k=1}^n \bigg[\prod_{j=1}^{k-1}(an+bk+cj+d+\lambda)\bigg] B_{n,k}(!x),
\end{equation}
where $!x=(1!x_1,2!x_2,\dots)$ and\, $!y=(1!y_1,2!y_2,\dots)$.
\end{theorem}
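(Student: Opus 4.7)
The plan is to expand the left-hand side of \eqref{eq:lambda_general} using the definition of $y$ together with the combinatorial interpretation of partial Bell polynomials, thereby reducing \eqref{eq:lambda_general} to a purely combinatorial ``kernel identity'' over set partitions. This strategy generalizes the argument used for Theorem~15 of \cite{BGW12}, which handles a restricted subfamily of parameters.

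First, I would invoke the set-partition expansion $B_{n,k}(!z) = \sum_{\pi}\prod_{B\in\pi}|B|!\,z_{|B|}$, where $\pi$ runs over set partitions of $[n]$ into $k$ blocks. Applied to the left-hand side of \eqref{eq:lambda_general} after substituting $m!\,y_m = \sum_{\ell}\prod_{j=1}^{\ell-1}(am+b\ell+cj+d)\,B_{m,\ell}(!x)$ into $|B|!\,y_{|B|}$ for each block $B$, and then further expanding each $B_{|B|,\ell}(!x)$ as a sum over set partitions of $B$, one obtains a sum indexed by pairs $(\sigma,\pi)$ of set partitions of $[n]$ with $\sigma$ a refinement of $\pi$. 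Collecting terms by $\sigma$ and comparing with the analogous expansion of the right-hand side, \eqref{eq:lambda_general} becomes equivalent to the following kernel identity: for every $K\ge 1$ and every tuple $(s_1,\dots,s_K)$ of positive integers with $n=\sum s_i$,
\begin{equation*}
  \sum_{\tau}\Big[\prod_{j=1}^{|\tau|-1}(\lambda-d(j-1))\Big]\prod_{G\in\tau}\prod_{j=1}^{|G|-1}\bigl(a\,s(G)+b|G|+cj+d\bigr) \;=\; \prod_{j=1}^{K-1}(an+bK+cj+d+\lambda),
\end{equation*}
where $\tau$ ranges over set partitions of $[K]$ and $s(G)=\sum_{i\in G}s_i$.

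The striking feature of this kernel identity is that its right-hand side depends on the block sizes only through $n=\sum s_i$, whereas individual terms on the left depend on the $s_i$ in a complicated manner. I would prove it by induction on $K$, isolating the block of $\tau$ that contains the element $K$: writing this block as $\{K\}\cup S$ with $S\subseteq[K-1]$ and splitting the sum accordingly. The one-step recurrence $\prod_{j=1}^{k-1}(\lambda-d(j-1)) = (\lambda-d(k-2))\prod_{j=1}^{k-2}(\lambda-d(j-1))$ for $k\ge 2$, together with an analogous multiplicative decomposition of the right-hand side, drives the induction.

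The main obstacle is the kernel identity itself, since the cancellation of the $s_i$-asymmetry on the left-hand side requires careful bookkeeping and is not apparent term-by-term. A likely cleaner alternative is to invoke a Lagrange-inversion-type lemma for partial Bell polynomials from \cite{BGW12} (collected in the Appendix), which handles parameter shifts directly and bypasses the explicit inductive accounting; in this approach the hypothesis $c\ne0$ is used to guarantee invertibility of the auxiliary transformations involved in the reduction.
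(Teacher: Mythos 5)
Your reduction to the ``kernel identity'' is sound: expanding $B_{n,k}(!y)$ over set partitions, substituting the definition of $m!\,y_m$ block by block, and collecting by the common refinement $\sigma$ does reduce \eqref{eq:lambda_general} to the stated identity over partitions $\tau$ of $[K]$ (and, conversely, that identity is exactly the coefficient-wise content of the theorem). The problem is that this kernel identity \emph{is} the theorem, and your proposal does not actually prove it. The cancellation you flag as ``careful bookkeeping'' is worse than that phrase suggests: the contributions of individual partition shapes are genuinely not symmetric functions of $n=\sum s_i$ alone. For instance, at $K=4$ the two-pairs shapes contribute a term proportional to $-a^2\lambda\sum_i s_i^2$ and the triple-plus-singleton shapes contribute $+a^2\lambda\sum_i s_i^2$; only the sum over all shapes with the same number of blocks collapses to a function of $n$. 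Your proposed induction on $K$ by isolating the block containing $K$ runs into two structural obstacles: the weight $\prod_{j=1}^{|\tau|-1}(\lambda-d(j-1))$ couples to the \emph{global} block count of $\tau$, so removing one block shifts the weight in a way that depends on $|\tau|$ rather than factoring off cleanly; and the inductive hypothesis applied to the remaining blocks produces a product built from $a\cdot s([K-1]\setminus S)$, not from $an$, so the target form is not preserved unless you first generalize the identity to decouple the ``$an$'' parameter from the block sizes. Neither issue is addressed, so the induction as described does not close.

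Your fallback --- ``invoke a Lagrange-inversion-type lemma from \cite{BGW12}'' --- is indeed the route the paper takes, but gesturing at it is not a proof. The actual argument requires a specific reparametrization: one rewrites $\prod_{j=1}^{k-1}(an+bk+cj+d)$ as $\prod_{j=1}^{k-1}(an+(b+c)k-cj+d)=c^{k-1}\binom{\frac{a}{c}n+\frac{b+c}{c}k+\frac{d}{c}-1}{k-1}(k-1)!$ and uses homogeneity of $B_{n,k}$ to absorb the factor $c^{k-1}$, which is precisely where $c\neq 0$ enters (not as an ``invertibility of auxiliary transformations'' condition). One then needs \emph{two} separate interpolation results --- Proposition~\ref{prop:lambda_c-1} when $d\neq 0$ and Proposition~\ref{prop:lambda_minus1} when $d=0$ --- each proved via Fa\`a di Bruno together with the logarithmic and potential polynomials of Lemma~\ref{lem:log&power}, and a final conversion $d^{k-1}\binom{\lambda/d}{k-1}(k-1)!=\prod_{j=1}^{k-1}(\lambda-dj+d)$. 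None of these steps appear in your write-up, so as it stands the proposal identifies a correct equivalent statement and two candidate strategies without completing either.
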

\begin{proof} 
Since $c\not=0$, we can write $y_n$ as
\begin{align*}
   y_n &=\sum_{k=1}^n \frac{1}{n!} \bigg[\prod_{j=1}^{k-1}(an+bk+cj+d)\bigg] B_{n,k}(!x) \\
   &=\sum_{k=1}^n \frac{1}{n!} \bigg[\prod_{j=1}^{k-1}(an+(b+c)k-cj+d)\bigg] B_{n,k}(!x) \\
   &=\sum_{k=1}^n \frac{c^{k-1}}{n!} \binom{\frac{a}{c}n+\frac{(b+c)}{c}k+\frac{d}{c}-1}{k-1} (k-1)! B_{n,k}(!x).
\end{align*}
By the homogeneity property $B_{n,k}(cz_1,cz_2,\dots) = c^k B_{n,k}(z_1,z_2,\dots)$, we then conclude
\begin{equation*}
  n! cy_n = \sum_{k=1}^n \binom{\frac{a}{c}n+\frac{(b+c)}{c}k+\frac{d}{c}-1}{k-1} (k-1)! B_{n,k}(1!cx_1,2!cx_2,\dots).
\end{equation*}

If $d\not=0$, we use Proposition \ref{prop:lambda_c-1} with $x$ replaced by $(1!cx_1,2!cx_2,\dots)$, $y$ replaced by $(1!cy_1,2!cy_2,\dots)$, $\alpha=a/c$, $\beta=(b+c)/c$, $\gamma=d/c$, and $\lambda$ replaced by $\lambda/c$, to obtain
\begin{align*}
\sum_{k=1}^n  \Big(\frac{d}{c}\Big)^{k-1} 
&\binom{\lambda/d}{k-1} (k-1)! B_{n,k}(1!cy_1,2!cy_2,\dots) \\
&= \sum_{k=1}^n\binom{\frac{a}{c} n+\frac{b+c}{c} k +\frac{d}{c} -1 + \frac{\lambda}{c}}{k-1}(k-1)!B_{n,k}(1!cx_1,2!cx_2,\dots) \\
&=\sum_{k=1}^n c\bigg[\prod_{j=1}^{k-1}{(an+bk+cj+d+\lambda)}\bigg] B_{n,k}(1!x_1,2!x_2,\dots).
\end{align*}
This implies
\begin{equation*}
\sum_{k=1}^n  d^{k-1} \tbinom{\lambda/d}{k-1} (k-1)! B_{n,k}(!y)
 =\sum_{k=1}^n \bigg[\prod_{j=1}^{k-1}(an+bk+cj+d+\lambda)\bigg] B_{n,k}(!x),
\end{equation*}
which gives \eqref{eq:lambda_general} noting that $d^{k-1} \binom{\lambda/d}{k-1}(k-1)! = \prod_{j=1}^{k-1}(\lambda-dj+d)$.

If $d=0$, the above expression for $n! cy_n$ reduces to
\begin{equation*}
   n! cy_n = \sum_{k=1}^n \binom{\frac{a}{c}n+\frac{(b+c)}{c}k-1}{k-1} (k-1)! B_{n,k}(1!cx_1,2!cx_2,\dots).
\end{equation*}
Using now Proposition~\ref{prop:lambda_minus1} with $x$ replaced by $(1!cx_1,2!cx_2,\dots)$, $y$ replaced by $(1!cy_1,2!cy_2,\dots)$, $\alpha=a/c$, $\beta=(b+c)/c$, and $\lambda$ replaced by $\lambda/c$, we get
\begin{align*}
\sum_{k=1}^n \Big(\frac{\lambda}{c}\Big)^{k-1} & B_{n,k}(1!cy_1,2!cy_2,\dots) \\
&= \sum_{k=1}^n\tbinom{\frac{a}{c} n+\frac{b+c}{c} k-1 + \frac{\lambda}{c}}{k-1}(k-1)!B_{n,k}(1!cx_1,2!cx_2,\dots) \\
&= \sum_{k=1}^n c\bigg[\prod_{j=1}^{k-1}{(an+bk+cj+\lambda)}\bigg] B_{n,k}(1!x_1,2!x_2,\dots).
\end{align*}
This implies
\begin{equation*}
 \sum_{k=1}^n \lambda^{k-1} B_{n,k}(!y)
 =\sum_{k=1}^n \bigg[\prod_{j=1}^{k-1}(an+bk+cj+\lambda)\bigg] B_{n,k}(!x),
\end{equation*}
which is the statement of the theorem for $d=0$.
\end{proof}

\medskip
As a consequence, we obtain the following inverse relations.

\begin{corollary}\label{cor:inverse}
Let $x=(x_n)_{n\in\mathbb{N}}$ and $y=\Y_{a,b,c,d}(x)$, that is,
\begin{equation*}
   y_n=\sum_{k=1}^n  \frac{1}{n!}
   \bigg[\prod_{j=1}^{k-1}(an+bk+cj+d)\bigg] B_{n,k}(1!x_1,2!x_2,\dots) \text{ for } n\ge 1.
\end{equation*}
Further, define the polynomial $q_{n,k}(t)=t\prod\limits_{j=1}^{k-1}(an+dj+t)$.
\begin{enumerate}[$(i)$]
\item If $c\not=0$, then
  \begin{equation*}
     x_n=\sum_{k=1}^n \frac{(-1)^{k-1}}{n!} \left[\frac{q_{n,k}(b+c)-q_{n,k}(b)}{c}\right] B_{n,k}(1!y_1,2!y_2,\dots).
  \end{equation*}
In other words, 
\begin{equation*}
  \Y_{a,b,c,d}^{-1}=\tfrac{b+c}{c}\Y_{-a,0,-d,-b-c}-\tfrac{b}{c}\Y_{-a,0,-d,-b}. 
\end{equation*}
In particular, $\Y_{a,0,c,d}^{-1}=\Y_{-a,0,-d,-c}$ \,and\, $\Y_{a,-c,c,d}^{-1}=\Y_{-a,0,-d,c}$.
\item If $c=0$, then
  \begin{equation*}
     x_n=\sum_{k=1}^n \frac{(-1)^{k-1}}{n!} q'_{n,k}(b) B_{n,k}(1!y_1,2!y_2,\dots),
  \end{equation*}
where $q'_{n,k}=\frac{d}{dt}q_{n,k}$. In particular, $\Y_{a,0,0,d}^{-1}=\Y_{-a,0,-d,0}$.
\end{enumerate}
\end{corollary}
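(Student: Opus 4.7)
The plan is to extract the inverse formula directly from Theorem~\ref{thm:lambda_general} by choosing a convenient specialization of the free parameter $\lambda$ and then forming a telescoping difference. Concretely, I would substitute $\lambda=-t-an-d$ (with $t$ a new indeterminate) into \eqref{eq:lambda_general}. On the left this turns the prefactor $\prod_{j=1}^{k-1}(\lambda-dj+d)$ into $(-1)^{k-1}\prod_{j=1}^{k-1}(t+an+dj)$, while on the right it becomes $\prod_{j=1}^{k-1}(bk+cj-t)$; multiplying both sides by $t$ and recognising $t\prod_{j=1}^{k-1}(t+an+dj)=q_{n,k}(t)$ gives the polynomial identity
\begin{equation*}
\sum_{k=1}^{n}(-1)^{k-1}q_{n,k}(t)\,B_{n,k}(!y)\;=\;t\sum_{k=1}^{n}\Bigl[\prod_{j=1}^{k-1}(bk+cj-t)\Bigr]B_{n,k}(!x). \tag{$\dagger$}
\end{equation*}

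Now I would evaluate $(\dagger)$ at the two values $t=b+c$ and $t=b$ and subtract. The left-hand side immediately produces the combination $(-1)^{k-1}[q_{n,k}(b+c)-q_{n,k}(b)]$ that appears in formula $(i)$. On the right-hand side the coefficient of $B_{n,k}(!x)$ becomes $(b+c)A_k-bB_k$, where
\begin{equation*}
A_k=\prod_{j=1}^{k-1}\bigl(b(k-1)+c(j-1)\bigr), \qquad B_k=\prod_{j=1}^{k-1}\bigl(b(k-1)+cj\bigr).
\end{equation*}

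The central computation—and the only real obstacle—is to show that $(b+c)A_k-bB_k=0$ for every $k\geq 2$, while the coefficient for $k=1$ equals $c$. I expect this to follow from extracting a common factor: the $j=1$ factor of $A_k$ is $b(k-1)$ and the $j=k-1$ factor of $B_k$ is $(k-1)(b+c)$, and the remaining products $\prod_{i=1}^{k-2}(b(k-1)+ci)$ coincide. After this cancellation, only the $k=1$ term survives on the right and contributes $c\cdot B_{n,1}(!x)=c\cdot n!\,x_n$; dividing through by $cn!$ yields formula $(i)$. The identification $\Y_{a,b,c,d}^{-1}=\tfrac{b+c}{c}\Y_{-a,0,-d,-b-c}-\tfrac{b}{c}\Y_{-a,0,-d,-b}$ is then read off by recognising, for each $k$, that $q_{n,k}(b+c)/(b+c)$ and $q_{n,k}(b)/b$ are precisely the prefactors defining those two Bell transforms applied to $y$; the two parenthetical identities follow by setting $b=0$ and $b=-c$ respectively.

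For part $(ii)$ I cannot apply Theorem~\ref{thm:lambda_general} at $c=0$ directly, so I would argue by continuity: the entries of $y=\Y_{a,b,c,d}(x)$ are polynomial in $c$, and the formula of $(i)$ is a polynomial identity in $c$ once one observes that the apparent pole is removable because $q_{n,k}(b+c)-q_{n,k}(b)$ vanishes at $c=0$. Letting $c\to 0$, the difference quotient $\tfrac{q_{n,k}(b+c)-q_{n,k}(b)}{c}$ tends to $q'_{n,k}(b)$, which gives the stated formula for $(ii)$; setting $b=0$ then recovers the particular case $\Y_{a,0,0,d}^{-1}=\Y_{-a,0,-d,0}$.
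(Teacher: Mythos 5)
Your proposal is correct and follows essentially the same route as the paper: your substitution $\lambda=-t-an-d$ evaluated at $t=b+c$ and $t=b$ is exactly the paper's pair of specializations $\lambda=-(an+b+c+d)$ and $\lambda=-(an+b+d)$ of Theorem~\ref{thm:lambda_general}, your cancellation $(b+c)A_k-bB_k=0$ for $k\ge 2$ via the common factor $\prod_{i=1}^{k-2}(b(k-1)+ci)$ is the paper's key identity, and part $(ii)$ is likewise obtained by letting $c\to 0$. Your remark that the $c\to 0$ limit is legitimate because everything is polynomial in $c$ is a touch more explicit than the paper's one-line justification.
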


\begin{proof}
First observe that $(ii)$ follows from $(i)$ by taking the limit as $c\to 0$.

If $c\not=0$, we can apply Theorem~\ref{thm:lambda_general} with $\lambda=-(an+b+c+d)$ to conclude
\begin{align*}
  \sum_{k=1}^n \tfrac{(-1)^{k-1}}{n!} & \tfrac{q_{n,k}(b+c)}{c} B_{n,k}(!y) \\
  &= \sum_{k=1}^n \tfrac{b+c}{c n!}\bigg[\prod\limits_{j=1}^{k-1}(-an-b-c-d-dj+d)\bigg] B_{n,k}(!y) \\
  &= \sum_{k=1}^n \tfrac{b+c}{c n!}\bigg[\prod\limits_{j=1}^{k-1}(b(k-1)+c(j-1))\bigg] B_{n,k}(!x).
\end{align*}
Similarly, but with $\lambda=-(an+b+d)$, we obtain
\begin{equation*}
  \sum_{k=1}^n \tfrac{(-1)^{k-1}}{n!} \tfrac{q_{n,k}(b)}{c} B_{n,k}(!y)
  = \sum_{k=1}^n \tfrac{b}{c n!}\bigg[\prod\limits_{j=1}^{k-1}(b(k-1)+cj)\bigg] B_{n,k}(!x).
\end{equation*}
Finally, for $k\ge 2$ we have
\[ \tfrac{b+c}{c}\prod\limits_{j=1}^{k-1}(b(k-1)+c(j-1)) - \tfrac{b}{c}\prod\limits_{j=1}^{k-1}(b(k-1)+cj) = 0, \]
and therefore,
\begin{align*}
  \sum_{k=1}^n \frac{(-1)^{k-1}}{n!} &\left[\frac{q_{n,k}(b+c)-q_{n,k}(b)}{c}\right] B_{n,k}(!y) \\
  &= \tfrac{b+c}{c n!} B_{n,1}(!x) - \tfrac{b}{c n!} B_{n,1}(!x) = \tfrac{1}{n!} B_{n,1}(!x) = x_n.
\end{align*}
This finishes the proof of $(i)$.
\end{proof}

%%%%%%%%%%%%%%%%%%%%%%%%%%%%%%%%%%%%%%%%%%%%
\section{Generating functions}
\label{sec:gf}
%%%%%%%%%%%%%%%%%%%%%%%%%%%%%%%%%%%%%%%%%%%%

In this section, we will discuss functional equations satisfied by the generating functions of the {\sc Bell} transforms $\Y_{a,b,c,d}$. Our approach to prove these identities showcases some of the interesting properties of partial Bell polynomials. 

As before, we adopt the convenient notation $!z = (1!z_1,2!z_2,\dots)$.

\begin{theorem}\label{thm:gf}
Let $x=(x_n)_{n\in\mathbb{N}}$ and $y=(y_n)_{n\in\mathbb{N}}$ be sequences such that $y=\Y_{a,b,c,1}(x)$ with $c\not=0$. Let $X(t)=\sum\limits_{n=1}^\infty x_n t^n$ and $Y(t)=\sum\limits_{n=1}^\infty y_n t^n$. Then
\begin{equation}\label{eq:gf_d=1}
   X\left(t\big(1+Y(t)\big)^{a}\right) = \frac1c\Big[1-\big(1+Y(t)\big)^{-c}\Big]\big(1+Y(t)\big)^{-b}.
\end{equation}
\end{theorem}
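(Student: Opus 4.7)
The plan is to verify the functional equation by applying the Lagrange inversion formula twice to the equation itself and matching the resulting coefficient of $t^n$ against the defining formula for $y_n$. I will assume throughout that $x_1 \neq 0$, so that the relevant power series are compositionally invertible; the general case follows since both sides depend polynomially on the entries of $x$.

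First I would set $u = 1 + Y(t)$, $G(u) = (u^{-b} - u^{-b-c})/c$, and $s = t\,u^a$, so the claimed identity reads $X(s) = G(u)$. Since $G(1+v) = v + O(v^2)$, the inverse $G^{-1}$ exists as a power series around $0$, so $u = G^{-1}(X(s))$ is a well-defined power series in $s$ with $u(0) = 1$. Then $t = s\,u(s)^{-a}$ has the form $t = s\,\psi(s)$ with $\psi(0) = 1$, which puts it in Lagrange-inversion form. Applying Lagrange inversion with $F(s) = u(s) - 1$, and using the antiderivative identity $u'(s)\,u(s)^{an} = \tfrac{1}{an+1}\tfrac{d}{ds}u(s)^{an+1}$, one obtains
\[ y_n \;=\; [t^n](u-1) \;=\; \tfrac{1}{n}[s^{n-1}]\,u'(s)\,u(s)^{an} \;=\; \tfrac{1}{an+1}[s^n]\,u(s)^{an+1}. \]

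Next I would apply Lagrange inversion a second time, now to invert $G$ and expand $u^{an+1}$ as a power series in $w = X(s)$. Setting $v = u - 1$, the Lagrange--Bürmann formula gives
\[ [w^k]\,u^{an+1} \;=\; \tfrac{an+1}{k}\,[v^{k-1}]\,(1+v)^{an}\!\left(\tfrac{v}{G(1+v)}\right)^{\!k}. \]
Using the factorization $G(1+v) = \tfrac{1}{c}(1+v)^{-b-c}\bigl[(1+v)^c - 1\bigr]$, the bracket becomes $c^k v^k (1+v)^{(b+c)k}\bigl[(1+v)^c - 1\bigr]^{-k}$, so the coefficient to extract is $[v^{k-1}]\,c^k v^k (1+v)^{an+(b+c)k}\bigl[(1+v)^c - 1\bigr]^{-k}$.

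The main obstacle will be evaluating this last coefficient in closed form. The key step is the substitution $\xi = (1+v)^c - 1$, equivalently $1+v = (1+\xi)^{1/c}$: applying the change-of-variable rule for formal residues converts the $[v^{k-1}]$ extraction into $c^{k-1}[\xi^{k-1}](1+\xi)^{(an+bk+1)/c + k - 1}$, which is a generalized binomial coefficient evaluating to $\tfrac{1}{(k-1)!}\prod_{j=1}^{k-1}(an+bk+cj+1)$. Assembling the pieces with $[s^n]X(s)^k = k!\,B_{n,k}(!x)/n!$ and dividing by $an+1$ yields exactly $y_n = \tfrac{1}{n!}\sum_{k=1}^n \bigl[\prod_{j=1}^{k-1}(an+bk+cj+1)\bigr] B_{n,k}(!x)$, which is the definition of $\Y_{a,b,c,1}(x)$; this proves the functional equation.
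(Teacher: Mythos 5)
Your proof is correct, and it takes a genuinely different route from the paper's. You treat the functional equation as implicitly defining a series $u(s)=1+\widetilde Y$ and recover its coefficients by two applications of Lagrange inversion: once to pass from $t$ to $s=t\,u^a$ (giving $[t^n](u-1)=\tfrac{1}{an+1}[s^n]u^{an+1}$), and once in Lagrange--B\"urmann form to expand $u^{an+1}$ in powers of $w=X(s)$, closing with the residue substitution $\xi=(1+v)^c-1$. I checked the key computation: the residue does come out to $c^{k-1}\binom{(an+bk+1)/c+k-1}{k-1}=\tfrac{1}{(k-1)!}\prod_{j=1}^{k-1}(an+bk+cj+1)$, and combined with $[s^n]X(s)^k=\tfrac{k!}{n!}B_{n,k}(!x)$ this reproduces the definition of $\Y_{a,b,c,1}(x)$ exactly, so the series solving the functional equation coincides with $Y$. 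The paper argues quite differently: it works coefficientwise on both sides of \eqref{eq:gf_d=1}, expanding $(1+Y)^{-b}$ and $(1+Y)^{-b-c}$ via the potential polynomials of Lemma~\ref{lem:log&power}, converting to $B_{n,k}(!x)$ through the interpolation identity \eqref{eq:lambda_general}, and matching against a Fa\`a di Bruno expansion of the left-hand side using the recurrence $B_{n,k+1}(z)=\sum_{\ell}\binom{n-1}{\ell}z_{n-\ell}B_{\ell,k}(z)$ as the bridge. Your route is more self-contained (it needs neither Theorem~\ref{thm:lambda_general} nor the appendix propositions) and additionally shows that the functional equation \emph{characterizes} $\Y_{a,b,c,1}$; the paper's route reuses machinery already built for the inverse relations and handles $a=0$ without any inversion. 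Two small points to tidy: the steps involving $\tfrac{1}{an+1}$ degenerate when $an+1=0$, so you should remark that for each fixed $n$ both sides are polynomial in $a$ and only finitely many values of $a$ are exceptional; and the hypothesis $x_1\neq 0$ is in fact never needed, since both series you invert have the form $v+O(v^2)$ and $s(1+O(s))$ respectively --- though your polynomiality fallback is harmless.
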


\begin{proof}
Let $Z(t) = 1+Y(t)$. Then the right-hand side of \eqref{eq:gf_d=1} becomes
\begin{equation*}
   \frac1c\Big[1-Z(t)^{-c}\Big]Z(t)^{-b} = \frac1c\Big[Z(t)^{-b} - Z(t)^{-b-c}\Big].
\end{equation*}
By Lemma~\ref{lem:log&power}, we have
\begin{align*}
 n![t^n] Z(t)^{-b} &= -b\sum_{k=1}^n\bigg[\prod_{j=1}^{k-1}{(-b-j)}\bigg] B_{n,k}(!y), \\
 n![t^n] Z(t)^{-b-c} &= -(b+c)\sum_{k=1}^n\bigg[\prod_{j=1}^{k-1}(-b-c-j)\bigg] B_{n,k}(!y).
\end{align*}
Applying \eqref{eq:lambda_general} with $d=1$ and $\lambda=-b-1$, we then get
\begin{equation*}
 n![t^n] Z(t)^{-b} = -b\sum_{k=1}^n \bigg[\prod_{j=1}^{k-1}(an+bk+cj-b)\bigg] B_{n,k}(!x),
\end{equation*}
and similarly, with $d=1$ and $\lambda=-b-c-1$, 
\begin{equation*}
 n![t^n] Z(t)^{-b-c} = -(b+c)\sum_{k=1}^n \bigg[\prod_{j=1}^{k-1}(an+bk+c(j-1)-b)\bigg] B_{n,k}(!x).
\end{equation*}
Now, for $k\ge 2$ we have 
\begin{align*}
 -\tfrac{b}{c} \prod_{j=1}^{k-1} (an + bk 
 &+ cj-b) + \tfrac{b+c}{c} \prod_{j=1}^{k-1}(an+bk+c(j-1)-b) \\
 &= -\tfrac{b}{c} \prod_{j=1}^{k-1}(an+bk+cj-b) + \tfrac{b+c}{c} \prod_{j=0}^{k-2}(an+bk+cj-b) \\
 &= an \prod_{j=1}^{k-2}(an+bk+cj-b) = an \prod_{j=1}^{k-2}(an+b(k-1)+cj),
\end{align*}
and therefore,
\begin{align*}
 n![t^n]\frac{1}{c}\Big(Z(t)^{-b} 
 &- Z(t)^{-b-c}\Big) \\
 &= n! x_n + \sum_{k=2}^n an \bigg[\prod_{j=1}^{k-2}(an+b(k-1)+cj) \bigg] B_{n,k}(!x) \\
 &= n! x_n + an\sum_{k=1}^{n-1} \bigg[\prod_{j=1}^{k-1}(an+bk+cj) \bigg] B_{n,k+1}(!x). 
\end{align*}
Using the identity $B_{n, k+1}(z_1,z_2,\dots)  = \sum\limits_{\ell=k}^{n-1} \binom{n-1}{\ell} z_{n-\ell} B_{\ell, k}(z_1,z_2,\dots)$, we get
\begin{align*} 
 an\sum_{k=1}^{n-1} \bigg[\prod_{j=1}^{k-1}(an+bk 
 &+ cj) \bigg] B_{n,k+1}(!x) \\
 &= a\sum_{k=1}^{n-1} \bigg[\prod_{j=1}^{k-1}(an+bk+cj) \bigg] 
   \sum_{\ell=k}^{n-1} \tfrac{n!(n-\ell)}{\ell!} x_{n-\ell} B_{\ell,k}(!x) \\
 &= a\sum_{\ell=1}^{n-1} \tfrac{n!(n-\ell)}{\ell!} x_{n-\ell}
   \sum_{k=1}^{\ell} \bigg[\prod_{j=1}^{k-1}(an+bk+cj) \bigg] B_{\ell,k}(!x),
\end{align*}
which implies
\begin{equation} \label{eq:KK_RHS}
 [t^n]\frac{1}{c}\Big(Z(t)^{-b} - Z(t)^{-b-c}\Big) 
 = x_n + a\sum_{\ell=1}^{n-1} \tfrac{n-\ell}{\ell!} x_{n-\ell}
 \sum_{k=1}^{\ell} \bigg[\prod_{j=1}^{k-1}(an+bk+cj) \bigg] B_{\ell,k}(!x).
\end{equation}

Let us next examine the left-hand side of \eqref{eq:gf_d=1}. First note that if $a=0$, then \eqref{eq:KK_RHS} gives the claimed statement. If $a\not=0$, we then use Lemma~\ref{lem:log&power} and Identity \eqref{eq:lambda_general} with $d=1$ and $\lambda=a-1$ to conclude
\begin{equation*}
 n![t^n] Z(t)^{a} = a\sum_{k=1}^n \bigg[\prod_{j=1}^{k-1}(an+bk+cj+a)\bigg] B_{n,k}(!x).
\end{equation*}
Now, if $(w_n)_{n\in\mathbb{N}}$ is the sequence defined by $tZ(t)^a = \sum_{n=1}^\infty w_n t^n$, then $w_1=1$ and
\begin{equation*}
 \frac{\ell!w_{\ell+1}}{a} = \sum_{k=1}^{\ell} \bigg[\prod_{j=1}^{k-1}(a\ell+bk+cj+a)\bigg] B_{\ell,k}(!x) \text{ for } \ell\ge 1.
\end{equation*}
Observe that, by means of \eqref{eq:lambda_general} with $d=a$ and $\lambda=an-a\ell-a$, we have
\small
\begin{equation*}
 \sum_{k=1}^\ell \bigg[\prod_{j=1}^{k-1}(an-a\ell-aj)\bigg]B_{\ell,k}(\tfrac{1!w_2}{a},\tfrac{2!w_3}{a},\dots)
 =\sum_{k=1}^\ell \bigg[\prod_{j=1}^{k-1}(an+bk+cj)\bigg] B_{\ell,k}(!x),
\end{equation*}
\normalsize
which implies
\small
\begin{equation} \label{eq:KK_intermediate}
 \sum_{k=1}^\ell \bigg[\prod_{j=1}^{k-1}(n-\ell-j)\bigg]B_{\ell,k}(1!w_2,2!w_3,\dots)
 = a\sum_{k=1}^\ell \bigg[\prod_{j=1}^{k-1}(an+bk+cj)\bigg] B_{\ell,k}(!x).
\end{equation}
\normalsize

On the other hand, Fa{\`a} di Bruno's formula combined with Equation (3$\ell$) in \cite[Sec.~3.3]{Comtet} gives 
\begin{align*}
 n![t]^n X(t Z(t)^a) &= \sum_{\ell=1}^n \ell! x_\ell B_{n,\ell}(1!w_1, 2!w_2,\dots) \\
 &= \sum_{\ell=1}^n \ell! x_\ell \sum_{k=2\ell-n}^\ell \frac{n!}{(n-\ell)! k!} B_{n-\ell,\ell-k}(1!w_2,2!w_3,\dots) \\
 &= \sum_{\ell=1}^n \ell! x_\ell \sum_{k=0}^{n-\ell} \frac{n!}{(n-\ell)! (\ell-k)!} B_{n-\ell,k}(1!w_2,2!w_3,\dots) \\
 &= \sum_{\ell=0}^{n-1} (n-\ell)! x_{n-\ell} \sum_{k=0}^{\ell} \frac{n!}{\ell! (n-\ell-k)!} B_{\ell,k}(1!w_2,2!w_3,\dots).\end{align*}
Since $\ell=0$ gives $n!x_n$, and since $\frac{(n-\ell)!}{(n-\ell-k)!} = (n-\ell)\prod_{j=1}^{k-1}(n-\ell-j)$, we arrive at
\small
\begin{equation} \label{eq:KK_LHS}
 [t^n]X(t Z(t)^a) = x_n + \sum_{\ell=1}^{n-1} \tfrac{n-\ell}{\ell!} x_{n-\ell} 
 \sum_{k=1}^{n-\ell} \bigg[\prod_{j=1}^{k-1}(n-\ell-j) \bigg] B_{\ell,k}(1!w_2,2!w_3,\dots).
\end{equation}
\normalsize
At last, the theorem follows by combining \eqref{eq:KK_RHS}, \eqref{eq:KK_intermediate}, and \eqref{eq:KK_LHS}.
\end{proof}

\begin{corollary}\label{cor:gf}
Let $x=(x_n)_{n\in\mathbb{N}}$ and $y=(y_n)_{n\in\mathbb{N}}$ be sequences such that $y=\Y_{a,b,c,d}(x)$. Let $X(t)=\sum\limits_{n=1}^\infty x_n t^n$ and $Y(t)=\sum\limits_{n=1}^\infty y_n t^n$. Then:

\smallskip
\begin{enumerate}[$(i)$]
\item If $c\not=0$ and $d\not=0$,
\begin{equation*}
 X\left(t\big(1+dY(t)\big)^{a/d}\right)=\frac1c\Big[1-\big(1+dY(t)\big)^{-c/d}\Big]\big(1+dY(t)\big)^{-b/d}.
\end{equation*}
\item If $c=0$ and $d\not=0$,
\begin{equation*}
 X\left(t\big(1+dY(t)\big)^{a/d}\right) = \log\Big(\big(1+dY(t)\big)^{1/d}\Big) \big(1+dY(t)\big)^{-b/d}.
\end{equation*}
\item If $c\not=0$ and $d=0$,
\begin{equation*}
 X\left(te^{aY(t)}\right) = \frac1c\Big[1-e^{-cY(t)}\Big]e^{-bY(t)}.
\end{equation*}
\item If $c=d=0$,
\begin{equation*}
 X\left(te^{aY(t)}\right) = Y(t)e^{-bY(t)}.
\end{equation*}
\end{enumerate}
\end{corollary}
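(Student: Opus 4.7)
The plan is to reduce all four cases to Theorem~\ref{thm:gf}, which handles $d=1$ and $c\neq 0$. For part $(i)$, I would rescale so as to absorb $d$ into the parameters. For parts $(ii)$--$(iv)$, I would pass to appropriate formal-power-series limits in $(i)$.

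First, assume $d\neq 0$. Pulling a factor of $d^{k-1}$ out of the product in the definition of $y_n$ yields
\[
y_n = \sum_{k=1}^n \frac{d^{k-1}}{n!}\bigg[\prod_{j=1}^{k-1}\Big(\tfrac{a}{d}n+\tfrac{b}{d}k+\tfrac{c}{d}j+1\Big)\bigg] B_{n,k}(!x).
\]
Multiplying both sides by $d$ and invoking the homogeneity identity $B_{n,k}(d\cdot !x) = d^k B_{n,k}(!x)$, I expect the rescaled sequences $\tilde x := (dx_n)$ and $\tilde y := (dy_n)$ to satisfy $\tilde y = \Y_{a/d,b/d,c/d,1}(\tilde x)$. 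When $c\neq 0$ we have $c/d\neq 0$, so Theorem~\ref{thm:gf} applies to $(\tilde x,\tilde y)$. Noting that $\tilde X(t)=dX(t)$ and $\tilde Y(t)=dY(t)$, the resulting identity, after dividing both sides by $d$, becomes exactly $(i)$.

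For the remaining parts, the point is that each coefficient of $Y(t)$ is polynomial in $a,b,c,d$, and every factor appearing in $(i)$ admits a formal-power-series expansion in $Y$ whose coefficients are polynomials in the relevant parameters. For instance,
\[
(1+dY)^{a/d} = \sum_{k\geq 0} \frac{a(a-d)(a-2d)\cdots(a-(k-1)d)}{k!}\, Y^k,
\]
\[
\frac{1}{c}\big[1-(1+dY)^{-c/d}\big] = \sum_{k\geq 1} \frac{(-1)^{k+1}(c+d)(c+2d)\cdots(c+(k-1)d)}{k!}\, Y^k,
\]
and similarly for $(1+dY)^{-b/d}$ and $(1+dY)^{-c/d}$. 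Part $(ii)$ then follows by letting $c\to 0$ and using $\tfrac{1}{c}[1-(1+dY)^{-c/d}]\to \tfrac{1}{d}\log(1+dY)$; part $(iii)$ follows by letting $d\to 0$ and using $(1+dY)^{\alpha/d}\to e^{\alpha Y}$ for $\alpha\in\{a,-b,-c\}$; part $(iv)$ follows from either of these by the complementary limit.

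The only nontrivial step is the bookkeeping in the rescaling: one must verify that the factor $d^{k-1}$ pulled from the product, the extra $d$ introduced on the left, and the $d^k$ arising from Bell polynomial homogeneity all combine correctly so that $\tilde y=(dy_n)$ is indeed the image of $\tilde x=(dx_n)$ under $\Y_{a/d,b/d,c/d,1}$. Once this alignment is confirmed, everything else is a routine manipulation of formal power series and the four stated limits.
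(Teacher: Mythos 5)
Your proposal is correct and follows essentially the same route as the paper: rescale $x$ and $y$ by $d$ to reduce case $(i)$ to Theorem~\ref{thm:gf} via the homogeneity of $B_{n,k}$, then obtain $(ii)$--$(iv)$ as formal limits in the parameters. Your explicit expansions justifying the limits, and your verification that the factors $d^{k-1}$, $d$, and $d^k$ align in the rescaling, merely flesh out steps the paper leaves as ``easy to check.''
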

\begin{proof}
Assume $d\not=0$. If we let $\hat x=(dx_n)_{n\in\mathbb{N}}$ and $\hat y=\Y_{\frac{a}{d},\frac{b}{d},\frac{c}{d},1}(\hat x)$, then \eqref{eq:gf_d=1} implies
\begin{equation*}
   \widehat X\left(t\big(1+\widehat Y(t)\big)^{a/d}\right) = \frac{d}{c}\Big[1-\big(1+\widehat Y(t)\big)^{-c/d}\Big]\big(1+\widehat Y(t)\big)^{-b/d}.
\end{equation*}
Using \eqref{eq:transform} together with the homogeneity of the partial Bell polynomials, it is easy to check that $\widehat Y(t) = dY(t)$. Since $\widehat X(t) = dX(t)$, the above identity gives $(i)$. 

The claimed identity in $(ii)$ follows from the identity in $(i)$ by taking the limit as $c\to 0$, and $(iii)$ follows from $(i)$ by taking the limit as $d\to 0$. The statement in $(iv)$ follows from any of the previous items with a corresponding limit argument.
\end{proof}

%%%%%%%%%%%%%%%%%%%%%%%%%%%%%%%%%%%%%%%%%%%%
\section{A convolution formula}
\label{sec:convolution}
%%%%%%%%%%%%%%%%%%%%%%%%%%%%%%%%%%%%%%%%%%%%

In this section we consider a class of sequences, defined through {\sc Bell} transforms, that is amenable to convolutions of arbitrary length.

Once again, we use the notation $!z = (1!z_1,2!z_2,\dots)$.

\begin{proposition}
Let $x=(x_n)_{n\in\mathbb{N}}$ and let $y=\Y_{a,b,c,d}(x)$ with $d\not=0$. Let $\hat y=d\cdot\Y_{a,b,c,d}(x)$, that is, $(\hat y_n)_{n\in\mathbb{N}}$ is the sequence defined by
\begin{equation*}
   \hat y_n = d\cdot \sum_{k=1}^n  \frac{1}{n!}
   \bigg[\prod_{j=1}^{k-1}(an+bk+cj+d)\bigg] B_{n,k}(!x) \text{ for } n\ge 1.
\end{equation*}
If we let $\hat y_0=1$ and $r\in\mathbb{N}$, we have
\begin{equation*}
\sum_{m_1+\dots+m_r=n} \!\! \hat y_{m_1}\cdots \hat y_{m_r} =
  dr\cdot \sum_{k=1}^{n} \frac{1}{n!} \bigg[\prod_{j=1}^{k-1}(an+bk+cj+dr)\bigg] B_{n,k}(!x).
\end{equation*}
\end{proposition}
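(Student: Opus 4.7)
The natural approach is to translate the identity into a generating-function statement and then invoke Corollary~\ref{cor:gf}. With the convention $\hat y_0=1$, set
\[
Z(t)=\sum_{n\ge 0}\hat y_n\,t^n = 1+dY(t),
\]
so that the left-hand side of the claimed identity is precisely $[t^n]\,Z(t)^r$. The plan is to identify $Z(t)^r$ with the generating series of an auxiliary Bell transform. Concretely, introduce $y^{(r)}:=\Y_{a,b,c,dr}(x)$ and set $Y^{(r)}(t)=\sum_{n\ge 1}y^{(r)}_n\,t^n$. Since $dr\,y^{(r)}_n$ is, by definition, the asserted right-hand side, the proposition reduces to the single identity
\[
Z(t)^r = 1+dr\,Y^{(r)}(t).
\]

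Assume first $c\neq 0$. Corollary~\ref{cor:gf}$(i)$ applied to $y=\Y_{a,b,c,d}(x)$ gives
\[
X\bigl(t\,Z(t)^{a/d}\bigr)=\tfrac{1}{c}\bigl[1-Z(t)^{-c/d}\bigr]\,Z(t)^{-b/d}.
\]
Writing $W(t)=Z(t)^r$ and using the identity $Z^{\alpha}=W^{\alpha/r}$ for arbitrary exponents $\alpha$, this rewrites verbatim as
\[
X\bigl(t\,W(t)^{a/(dr)}\bigr)=\tfrac{1}{c}\bigl[1-W(t)^{-c/(dr)}\bigr]\,W(t)^{-b/(dr)},
\]
which is precisely the functional equation that Corollary~\ref{cor:gf}$(i)$ attaches to the Bell transform $\Y_{a,b,c,dr}(x)$. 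Since such a functional equation has a unique formal power series solution with constant term $1$ (the coefficient of $t^n$ in $W$ is forced recursively by $x_1,\dots,x_n$ and $[t^m]W$ for $m<n$, and this is also implicit in the invertibility established in Corollary~\ref{cor:inverse}), we conclude $W(t)=1+dr\,Y^{(r)}(t)$, and extracting $[t^n]$ yields the stated identity.

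The case $c=0$ proceeds identically using Corollary~\ref{cor:gf}$(ii)$ in place of $(i)$: under the substitution $W=Z^r$ one has
\[
\log\bigl(W^{1/(dr)}\bigr)=\tfrac{1}{dr}\log(Z^r)=\tfrac{1}{d}\log Z,
\]
so the right-hand side $\log(Z^{1/d})\,Z^{-b/d}$ matches $\log(W^{1/(dr)})\,W^{-b/(dr)}$, which is the functional equation attached to $\Y_{a,b,0,dr}(x)$, and the same uniqueness argument applies. The main (and essentially only) subtlety is ensuring that the fractional-power manipulation $Z^{\alpha}=W^{\alpha/r}$ is legitimate at the level of formal power series; this is guaranteed by $Z(0)=1$, after which the remaining steps are routine bookkeeping.
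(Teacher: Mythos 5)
Your proof is correct, but it takes a genuinely different route from the paper. The paper works directly at the level of coefficients: it expands $n![t^n](1+dY(t))^r$ via the potential polynomials of Lemma~\ref{lem:log&power} to get $dr\sum_{k}\big[\prod_{j=1}^{k-1}(dr-dj)\big]B_{n,k}(!y)$, and then applies the interpolation identity \eqref{eq:lambda_general} with $\lambda=dr-d$ to convert this into the stated sum over $B_{n,k}(!x)$ --- no generating-function uniqueness is needed. You instead transport the functional equation of Corollary~\ref{cor:gf} under the substitution $W=Z^r$, which makes the phenomenon ``raising to the $r$-th power rescales $d$ to $dr$'' conceptually transparent, but it forces you to supply a converse that the paper never states: Corollary~\ref{cor:gf} only asserts that the Bell transform satisfies the functional equation, not that the equation determines $Y$ from $X$. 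Your recursive-extraction argument for that uniqueness is the right one and does work --- writing $W=1+\sum_{n\ge1}w_nt^n$, the coefficient of $w_n$ in $[t^n]$ of the right-hand side of the equation for parameters $(a,b,c,dr)$ is $\tfrac1c\big(-\tfrac{b}{dr}+\tfrac{b+c}{dr}\big)=\tfrac{1}{dr}\neq 0$, while the left-hand side $[t^n]X(tW^{a/(dr)})$ involves only $w_1,\dots,w_{n-1}$ --- but you should state this nonvanishing explicitly rather than lean on Corollary~\ref{cor:inverse}, which establishes invertibility in the opposite direction (recovering $x$ from $y$, not $W$ from $X$) and so does not by itself justify the uniqueness you need. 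With that step made precise, your argument is complete; the paper's proof buys a shorter, purely algebraic derivation, while yours buys a cleaner structural explanation at the cost of an extra (easy) lemma.
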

\begin{proof}
First note that if $Y(t)=\sum\limits_{n=1}^\infty y_n t^n$, then the expression $\sum \hat y_{m_1}\cdots \hat y_{m_r}$ is given by the coefficients of $(1+dY(t))^r$. By Lemma~\ref{lem:log&power}, we have
\begin{align*}
 n![t^n] (1+dY(t))^r &= r\sum_{k=1}^n\bigg[\prod_{j=1}^{k-1}{(r-j)}\bigg] B_{n,k}(!(dy)) \\
 &= r\sum_{k=1}^n d^k\bigg[\prod_{j=1}^{k-1}{(r-j)}\bigg] B_{n,k}(!y) \\
 &= dr\sum_{k=1}^n \bigg[\prod_{j=1}^{k-1}{(dr-dj)}\bigg] B_{n,k}(!y).
\end{align*}
Applying \eqref{eq:lambda_general} with $\lambda=dr-d$, we get
\begin{equation*}
\sum_{k=1}^n \bigg[\prod_{j=1}^{k-1}(dr-dj)\bigg]B_{n,k}(!y)
 =\sum_{k=1}^n \bigg[\prod_{j=1}^{k-1}(an+bk+cj+dr)\bigg] B_{n,k}(!x),
\end{equation*}
which gives the claimed convolution formula.
\end{proof}

\begin{remark}
If $d=1$, we have $\hat y = y = \Y_{a,b,c,1}(x)$. If in addition $c=-1$, we may write the sequence $(y_n)_{n\in\mathbb{N}}$ as
\begin{equation} \label{eq:ab_BellTransform}
   y_n = \sum_{k=1}^{n} \binom{an+bk}{k-1} \frac{(k-1)!}{n!} B_{n,k}(!x),
\end{equation}
and the above convolution formula (with $y_0=1$) becomes
\begin{equation*}
  \sum_{m_1+\dots+m_r=n} \!\! y_{m_1}\cdots y_{m_r} =
  r\cdot\! \sum_{k=1}^{n} \binom{an+bk+r-1}{k-1} \frac{(k-1)!}{n!} B_{n,k}(!x).
\end{equation*}
As discussed by the authors in \cite{BGW14a}, this formula may be used (with appropriate choices of the parameters $a$ and $b$) to obtain convolution formulas of known sequences like Fibonacci, Jacobsthal, Motzkin, Fuss-Catalan, and many others.
\end{remark}

\begin{corollary}
For $a, b\in\mathbb{R}$, not both equal to $0$, the sequence $(y_n)_{n\in\mathbb{N}}$ defined by \eqref{eq:ab_BellTransform} satisfies the recurrence
\begin{equation} \label{eq:abRecurrence}
 y_n=\sum_{\ell=1}^n \; x_\ell \!\! \sum_{i_1+\dots+i_{a\ell+b}=n-\ell}y_{i_1}\dots y_{i_{a\ell+b}}
\end{equation}
where each $i_j$ is a nonnegative integer and $y_0=1$. 
\end{corollary}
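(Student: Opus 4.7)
The plan is to derive the recurrence directly from the generating-function identity in Corollary~\ref{cor:gf}(i), bypassing any further Bell-polynomial manipulation. Specializing that corollary to $(c,d)=(-1,1)$ yields
\[
 X\!\bigl(t(1+Y(t))^a\bigr) \;=\; -\bigl[1-(1+Y(t))\bigr](1+Y(t))^{-b} \;=\; Y(t)\,(1+Y(t))^{-b},
\]
so, multiplying through by $(1+Y(t))^{b}$, one obtains the key functional equation
\[
 Y(t) \;=\; (1+Y(t))^{b}\, X\!\bigl(t(1+Y(t))^a\bigr).
\]

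I would then substitute the series $X(u)=\sum_{\ell\ge 1}x_{\ell}\,u^{\ell}$ on the right and combine the outer factor $(1+Y(t))^{b}$ with each $\bigl(t(1+Y(t))^{a}\bigr)^{\ell}$ to get
\[
 Y(t) \;=\; \sum_{\ell\ge 1} x_{\ell}\, t^{\ell}\,(1+Y(t))^{a\ell+b}.
\]
Using the convention $y_0=1$ so that $1+Y(t)=\sum_{i\ge 0}y_i\,t^i$, the power $(1+Y(t))^{a\ell+b}$ is precisely the ordinary generating function of the compositional sums $\sum_{i_1+\cdots+i_{a\ell+b}=m}y_{i_1}\cdots y_{i_{a\ell+b}}$. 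Extracting the coefficient of $t^n$ on both sides then gives \eqref{eq:abRecurrence} verbatim.

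There is essentially no obstacle once Corollary~\ref{cor:gf} is in hand: the nontrivial analytic content has already been absorbed into that functional equation, and the remaining work is routine coefficient extraction. The hypothesis that $a$ and $b$ are not both $0$ enters only to guarantee that the exponent $a\ell+b$ is a positive integer for each $\ell\ge 1$ appearing in the sum, so that the inner convolution over $i_1+\cdots+i_{a\ell+b}=n-\ell$ has its intended combinatorial meaning; in the excluded case $a=b=0$ one has $a\ell+b=0$ and the recurrence would degenerate.
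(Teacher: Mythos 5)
Your derivation is correct, and it is worth noting that the paper itself offers no proof of this corollary: it simply cites \cite[Prop.~3.3]{BGMW} for the details. Your route --- specializing Corollary~\ref{cor:gf}$(i)$ to $(c,d)=(-1,1)$ to get $Y(t)=(1+Y(t))^{b}X\bigl(t(1+Y(t))^{a}\bigr)$, expanding $X$ as a series, absorbing the outer factor into each term to obtain $Y(t)=\sum_{\ell\ge 1}x_\ell t^\ell(1+Y(t))^{a\ell+b}$, and reading off $[t^n]$ --- is a clean, self-contained argument that stays entirely within results already proved in the paper (the functional equation of Theorem~\ref{thm:gf}/Corollary~\ref{cor:gf} is established independently of this recurrence, so there is no circularity). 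What it buys is a two-line replacement for an external citation. Two small caveats on your closing remarks: the hypothesis that $a$ and $b$ are not both zero does \emph{not} by itself force $a\ell+b$ to be a positive integer (take $a=1$, $b=-5$, or non-integer $a$); for the recurrence to parse, and for your expansion of $(1+Y(t))^{a\ell+b}$ as an $(a\ell+b)$-fold product to be legitimate, one must implicitly assume $a\ell+b\in\mathbb{N}_0$ for every $\ell$ with $x_\ell\neq 0$ --- an assumption the paper's statement also leaves tacit, so you are no worse off. And the case $a=b=0$ does not actually break the recurrence: with the convention that a sum over compositions into zero parts equals $[\,n-\ell=0\,]$, the identity reduces to $y_n=x_n$, which is exactly what $\Y_{0,0,-1,1}=\mathrm{id}$ gives; the exclusion in the statement is therefore a matter of the intended combinatorial reading rather than of validity. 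Neither point affects the correctness of your proof.
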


A detailed proof of \eqref{eq:abRecurrence} can be found in \cite[Prop.~3.3]{BGMW}.

%%%%%%%%%%%%%%%%%%%%%%%%%%%%%%%%%%%%%%%%%%%%
\section{Examples}
\label{sec:examples}
%%%%%%%%%%%%%%%%%%%%%%%%%%%%%%%%%%%%%%%%%%%%

\begin{example}[{\sc identity}]
If $a=b=c=d=0$, we get $y_n = \frac{1}{n!}B_{n,1}(!x) = x_n$ for every $n\in\mathbb{N}$. Thus $\Y_{0,0,0,0}$ is the identity map.
\end{example}

\begin{example}[{\sc exp}]
If $a=b=c=0$ and $d=1$, then
  \begin{equation*}
     X(t) = \log\big(1+Y(t)\big), \text{ or equivalently, } 1+Y(t) = e^{X(t)}.
  \end{equation*}
Therefore, a sequence $(b_n)$ is the {\sc exp} transform of $(a_n)$, as defined in \cite{BS95}, if and only if $y=(b_n/n!)$ is the {\sc Bell} transform of $x=(a_n/n!)$ with parameters $(0,0,0,1)$.
\end{example}

\begin{example}[{\sc invert}] \label{ex:Bell01}
If $a=b=0$ and $c=d=m$, then
 \begin{equation*}
   X(t)=\frac1m\Big[1-\big(1+mY(t)\big)^{-1}\Big], \text{ or equivalently, } 1+mY(t) = \frac{1}{1-mX(t)}.
 \end{equation*}
For $m\in\mathbb{N}$, $\Y_{0,0,m,m}(x)$ is the $m$-th {\sc invert} transform of $x$.
\end{example}

\begin{example}[{\sc conv}]
If $a=b=0$, $c=-1$, and $d=m$, then
  \begin{equation*}
    X(t) =-1+\big(1+mY(t)\big)^{1/m}, \text{ which implies } (1+X(t))^m = 1+mY(t).
  \end{equation*}
In particular, if $m=2$ and $(x_n)$ is such that $x_0=1$, then $\Y_{0,0,-1,2}=\frac12${\sc conv}.
\end{example}

\begin{example}[{\sc revert}]
The {\sc revert} transform, as it was defined in \cite{BS95}, is closely related to series reversion.
Given a sequence $(a_n)$ with $a_1=1$, let $A(t)=\sum_{n=1}^\infty a_{n}t^n$. The {\sc revert} transform of $(a_n)$ is the sequence $(b_n)$ such that $B(t)=\sum_{n=1}^\infty (-1)^{n+1}b_{n}t^n$ is the power series inverse of $A(t)$. Thus $B(A(t))=t$ and $b_1=1$.

In order to express {\sc revert} in terms of a {\sc Bell} transform, we first recall the following fact (see e.g.\ {\cite[Corollary 11.3]{Charalambides}}):
\begin{equation}\label{eq:series_inversion}
\text{if }\; \phi(t) = t\bigg(1+\sum_{r=1}^\infty \alpha_r \frac{t^r}{r!}\bigg),
\text{ then }\; \phi^{-1}(u) = u\bigg(1+\sum_{n=1}^\infty \beta_n \frac{u^n}{n!}\bigg),
\end{equation}
where
\begin{equation*}
 \beta_n = \sum_{k=1}^n (-1)^k \frac{(n+k)!}{(n+1)!}B_{n,k}(\alpha_1,\alpha_2,\dots).
\end{equation*}
Observe that, since $a_1=1$, we have $A(t)= t\left(1+\sum_{n=1}^\infty a_{n+1}t^n\right)$. So, if we let $\alpha_r = r! a_{r+1}$, the above inversion formula gives
\begin{equation*}
 n!(-1)^{n} b_{n+1} = \sum_{k=1}^n (-1)^k \frac{(n+k)!}{(n+1)!}B_{n,k}(1!a_2,2! a_3,\dots).
\end{equation*}
In other words, for $n\ge 1$,
\begin{equation*}
  b_{n+1}=\sum_{k=1}^n  \frac{1}{n!}
  \bigg[\prod_{j=1}^{k-1}{(-n-j-1)}\bigg] B_{n,k}(1!a_2,-2! a_3,3!a_4,-4!a_5,\dots),
\end{equation*}
which is a {\sc Bell} transform with $a=-1$, $b=0$, $c=d=-1$. Using the operators
\begin{equation}\label{eq:auxOperators}
\begin{aligned}
 L\circ (x_1,x_2,\dots) &= (x_2,x_3,\dots), \\
 R\circ (x_1,x_2,\dots) &= (1,x_1,x_2,\dots), \\
 I\circ (x_1,x_2,\dots) &= (x_1,-x_2,x_3,-x_4,\dots),
\end{aligned}
\end{equation}
we get
\begin{equation*}
\text{\sc revert} = R\circ \Y_{-1,0,-1,-1} \circ  I\circ L.
\end{equation*}

\medskip
Finally, if $X(t)=\sum_{n=1}^\infty (-1)^n a_{n+1} t^n$ and $Y(t)=\sum_{n=1}^\infty b_{n+1} t^n$, then
  \begin{equation*}
      X\left(t(1-Y(t))\right) = -1+\big(1-Y(t)\big)^{-1} = \frac{Y(t)}{1-Y(t)}.
  \end{equation*}
\end{example}

\begin{example}[{\sc polygon dissection}] \label{ex:Bell11}
If $a=1$, $b=0$, $c=d=1$, then
 \begin{equation*}
  X\left(t\big(1+Y(t)\big)\right) = 1-\big(1+Y(t)\big)^{-1}= \frac{Y(t)}{1+Y(t)}.
 \end{equation*}
This transformation, which is clearly related to {\sc revert}, plays a role in the enumeration of polygon dissections by noncrossing diagonals. In particular, if $\mathbbm{1} = (1,1,1,\dots)$ is the sequence of ones, then $\Y_{1,0,1,1}(\mathbbm{1})$ gives the little Schr\"oder numbers $1, 1, 3, 11, 45, 197, 903, 4279, 20793,\dots$, cf.\ \cite[A001003]{Sloane}. A comprehensive discussion on polygon dissections can be found in \cite{BGW17a}.
\end{example}

\begin{example}[{\sc noncrossing partition}] \label{ex:Bell10}
If $a=1$, $b=0$, $c=-1$, $d=1$, then
  \begin{equation*}
    X\left(t\big(1+Y(t)\big)\right)=-\Big[1-\big(1+Y(t)\big)\Big]=Y(t).
  \end{equation*}
This is the functional equation for the {\sc noncrossing partition} transform as defined by Callan in \cite[Eqn.\ (4)]{Callan08}. As already mentioned in the introduction, $\Y_{1,0,-1,1}(\mathbbm{1})$ gives the sequence of Catalan numbers.
\end{example}

\begin{remark}
Observe that, since $\Y_{a,b,c,d} = \Y_{a,b+c,-c,d}$, the {\sc invert} transform is also given by $\Y_{0,1,-1,1}$, and the {\sc polygon dissection} transform is given by $\Y_{1,1,-1,1}$. In other words, the transformations discussed in Examples \ref{ex:Bell01}, \ref{ex:Bell11}, and \ref{ex:Bell10}, are all special instances of the {\sc Bell} transform $\Y_{a,b,-1,1}$. In this case, the elements of the sequence $\Y_{a,b,-1,1}(x)$ may be written as in \eqref{eq:ab_BellTransform}, and the generating functions of $(x_n)_{n\in\mathbb{N}}$ and $(y_n)_{n\in\mathbb{N}}$ satisfy
\begin{equation} \label{eq:ab_BellTransformGF}
 X\left(t\big(1+Y(t)\big)^{a}\right)=Y(t)\big(1+Y(t)\big)^{-b}.
\end{equation}
This particular family of transforms plays a role in the enumeration of colored Dyck paths, colored dissections of convex polygons, colored rooted trees, planar maps, permutations, and more. Some of these applications will be  discussed in the next section.
\end{remark}

We end this section with a brief discussion of the family $\T_{m} \stackrel{\mathrm{def}}{=} \Y_{m,0,-1,1}$.

\begin{example} \label{ex:Bellm0}
For $x=(x_n)_{n\in\mathbb{N}}$ and $m\in\mathbb{Z}$, the sequence $y=\T_{m}(x)$ may be written as
\begin{equation*}
   y_n = \sum_{k=1}^{n} \binom{mn}{k-1} \frac{(k-1)!}{n!} B_{n,k}(1!x_1, 2!x_2, \dots),
\end{equation*}
and in terms of generating functions, we have
\begin{equation*}
  X\left(t\big(1+Y(t)\big)^{m}\right) = Y(t).
\end{equation*}
In particular, if $x=(1,1,\dots)$, then 
\begin{align*}
 y_n &= \sum_{k=1}^n \binom{mn}{k-1}\frac{(k-1)!}{n!} B_{n,k}(1!, 2!, \dots) \\
 &= \sum_{k=1}^n \frac{1}{k} \binom{mn}{k-1} \binom{n-1}{k-1} \\
 &= \sum_{k=1}^n \frac{1}{mn+1} \binom{mn+1}{k}\binom{n-1}{n-k}
 = \frac{1}{mn+1} \binom{(m+1)n}{n}.
\end{align*}
Thus $\T_{m}(\mathbbm{1})$ gives the Fuss-Catalan numbers (see e.g.~\cite{Aval08,HLM08,PS2000}), which for $m\in\mathbb{N}$ enumerate the $m$-divisible noncrossing partitions of $[mn]$, cf.\ \cite{Bei85,Edel80}. 
\end{example}

In general,  $\T_0$ is the identity map, and by Lemma~\ref{lem:lambda} with $\alpha=m$, $\beta=0$, and $\lambda=m'\cdot n$, we obtain
\[ \T_{m'}\circ \T_{m} = \T_{m+m'}, \text{ and consequently, } \T_m^{-1} = \T_{-m}. \] 
This shows that, for $m\in\mathbb{N}$, $\T_{m}(x)$ is the $m$-th {\sc noncrossing partition} transform of $x$.

\bigskip
Here is a connection between the {\sc noncrossing partition} transform and the {\sc binomial} transform, as defined in \cite{BS95}.
\begin{example}
Let $\mathcal S_{\nu}$ be the operator defined by
\[ \mathcal S_{\nu} \circ (x_1,x_2,x_3,\dots) = (x_1+\nu,x_2,x_3,\dots). \]
Given $a=(a_0,a_1,\dots)$, let $b=\text{\sc binomial}(a)$ be the sequence defined by
\[ b_n = \sum_{k=0}^n \binom{n}{k} a_k \;\text{ for } n\in\mathbb{N}_0. \]
Then for $\nu\in\mathbb{N}$, and with $L$ and $R$ as in \eqref{eq:auxOperators}, we have 
\begin{equation*}
  \T_{1}\circ \mathcal S_\nu = L\circ \text{\sc binomial}^\nu\circ R\circ \T_{1}.
\end{equation*}
\end{example}

%%%%%%%%%%%%%%%%%%%%%%%%%%%%%%%%%%%%%%%%%%%%
\section{Application: Combinatorial structures}
\label{sec:applications}
%%%%%%%%%%%%%%%%%%%%%%%%%%%%%%%%%%%%%%%%%%%%

In this section we present a few examples that illustrate how {\sc Bell} transforms may be used to link the enumeration of certain classes of combinatorial structures with the enumeration of suitable building blocks that serve as primitive elements within each class.

\subsection*{Rational Dyck paths}
A class of objects that nicely demonstrates the above principle is the class of rational Dyck paths. Let $\mathcal{D}_{\beta/\alpha}(n)$ be the set of lattice paths from $(0,0)$ to $(\alpha n,\beta n)$ which may touch but never rise above the line $\alpha y=\beta x$, where $n$, $\alpha$, and $\beta$ are positive integers with $\gcd(\alpha,\beta)=1$. Further, let $\mathcal{D}_{\beta/\alpha}^*(n)$ be the set of paths in $\mathcal{D}_{\beta/\alpha}(n)$ that stay {\em strongly} below the line $\alpha y=\beta x$ (except at the endpoints). Clearly, the elements in $\mathcal{D}_{\beta/\alpha}(n)$ that are not in $\mathcal{D}_{\beta/\alpha}^*(n)$ can be thought of as concatenations of shorter paths from the sets $\mathcal{D}_{\beta/\alpha}^*(1), \mathcal{D}_{\beta/\alpha}^*(2),\dots, \mathcal{D}_{\beta/\alpha}^*(n-1)$, so it is not surprising that the sequence $\phi_n= \big|\mathcal{D}_{\beta/\alpha}(n)\big|$ is the {\sc invert} transform of $\psi_n=\big|\mathcal{D}_{\beta/\alpha}^*(n)\big|$. This fact, proved in \cite{Bizley}, implies
\begin{equation*}
   \phi_n = \sum_{k=1}^{n}\frac{k!}{n!}B_{n,k}(1!\psi_1,2!\psi_2,\dots).
\end{equation*}
In other words, with $\phi=(\phi_n)_{n\in\mathbb{N}}$ and $\psi=(\psi_n)_{n\in\mathbb{N}}$, we have
\[ \phi = \Y_{0,1,-1,1}(\psi). \]

However, a different interpretation is possible if we think of rational paths as words over the alphabet $\{a,b\}$. This may be done by associating the letter $a$ with the step $(1,0)$ and the letter $b$ with the step $(0,1)$. 
The elements of $\mathcal{D}_{\beta/\alpha} = \bigcup_{n} \mathcal{D}_{\beta/\alpha}(n)$ then correspond to a generalized Dyck language over $\{a,b\}$ having valuations $h(a)=\beta$ and $h(b)=-\alpha$.
In \cite{Duchon} Duchon established a connection between the set $\mathcal{D}_{\beta/\alpha}$ and its subset of corresponding factor-free words\footnote{A word in a language $L$ is said to be factor-free if it has no proper factor in $L$.}. In fact, if $\theta_n$ denotes the number of factor-free words in $\mathcal{D}_{\beta/\alpha}(n)$, then
\begin{equation}\label{eq:rationalPaths}
  \phi_n = \sum_{k=1}^{n} \binom{(\alpha + \beta)n}{k-1} \frac{(k-1)!}{n!} B_{n, k}(1! \theta_1,2! \theta_2,\dots).
\end{equation}
That is, 
\[ \phi = \Y_{\alpha+\beta,0,-1,1}(\theta). \]
As shown in \cite{BGMW}, equation \eqref{eq:rationalPaths} implies that the elements of $\mathcal{D}_{\beta/\alpha}(n)$ are in bijection to a class of Dyck paths with ascents of length $(\alpha+\beta)j$ (for $j=1,\dots,n$) that may be colored in $\theta_j$ different ways. As a consequence, every path in $\mathcal{D}_{\beta/\alpha}$ may be constructed from factor-free words through an insertion process determined by the corresponding $\theta$-colored Dyck paths. An explicit algorithm for this construction can be found in \cite[Section~2]{BGW17c}.

Finally, we have Bizley's formula (see \cite{Bizley}):
\begin{equation*}
  \phi_n=\sum_{k=1}^{n}\frac1{n!}B_{n,k}(1!f_1,2!f_2,\dots),
\end{equation*}
where $f_j=\frac1{(\alpha+\beta)j}\binom{(\alpha+\beta)j}{\alpha j}$ for $j\in\mathbb{N}$. In other words, 
\[ \phi = \Y_{0,0,0,1}(f), \]
which means that the sequence $(n! \phi_n)_{n\in\mathbb{N}}$ is the {\sc exp} transform of $(n!f_n)_{n\in\mathbb{N}}$. This formula reflects the enumeration of $\mathcal{D}_{\beta/\alpha}(n)$ using cyclical permutations of its elements.

\subsection*{Rooted planar maps}
In \cite{Tutte} Tutte studied the enumeration of rooted planar maps and established a link to the enumeration of nonseparable rooted planar maps, which under the action of $\Y_{2,0,-1,1}$ may be considered their prime elements. 

Let $a_n$ be the number of rooted planar maps with $n$ edges and let $b_n$ be the number of nonseparable rooted planar maps with $n$ edges. Let $A(t)$ and $B(t)$ be the generating functions for $a=(a_n)_{n\in\mathbb{N}}$ \cite[A000168]{Sloane} and $b=(b_n)_{n\in\mathbb{N}}$ \cite[A000139]{Sloane}, respectively. As proved by Tutte (cf.~\cite[Equation~6.3]{Tutte}), these functions satisfy the functional equation 
\begin{equation} \label{eq:planarMaps}
  A(t) = B\big(t(1+A(t))^2\big),
\end{equation}
which by Theorem~\ref{thm:gf} implies
\[ a =  \Y_{2,0,-1,1}(b). \]
In particular, using the results from \cite{BGMW}, we conclude:

\smallskip 
\begin{quote}
There is a bijection between the set of rooted planar maps with $n$ edges and the set of Dyck paths of semilength $2n$ having ascents of even length, and such that each $2j$-ascent may be colored in $b_j$ different ways. 
\end{quote}

\subsubsection*{Bicubic maps} Another example amenable to {\sc Bell} transformations is given by the following connection between rooted bicubic planar maps and their subclass of 3-connected elements. In \cite[Section~11]{Tutte} Tutte observed that ``Each rooted bicubic map can be represented as a multiple extension of a 3-connected bicubic core'' and proved the functional equation
\begin{equation} \label{eq:cubicMaps}
 F(t) = G\big(t(1+F(t))^3\big), 
\end{equation}
where $F(t)=\sum f_n t^n$ enumerates the rooted bicubic maps of $2n$ vertices and $G(t)=\sum g_n t^n$ counts those maps that are 3-connected. This means 
\[ f = \Y_{3,0,-1,1}(g). \]
Therefore, using the results from \cite{BGMW} once again, we conclude: 

\smallskip 
\begin{quote}
There is a bijection between the set of rooted bicubic maps of $2n$ vertices and the set of Dyck paths of semilength $3n$ having ascents of length multiple of 3, and such that each $3j$-ascent may be colored in $g_j$ different ways. 
\end{quote}

Note that the relation $g = \Y_{-3,0,-1,1}(f)$ provides the identity
\begin{equation*}
   g_n = \sum_{k=1}^{n} \binom{-3n}{k-1} \frac{(k-1)!}{n!} B_{n,k}(1!f_1, 2!f_2, \dots).
\end{equation*}
Since $f_j = \frac{3(2j-1)!2^j}{(j-1)!(j+2)!}$ for $j\ge 1$ (see \cite{Tutte} and \cite[A000257]{Sloane}), the above formula\footnote{Which may be written in terms of binomials using Lemma~\ref{lem:bellA257}.} provides a way to compute $g_n$. Here are the first few values: 
\[ 1, 0, 0, 1, 0, 3, 7, 15, 63, 168, 561, 1881, 6110, 21087, \dots \text{ (\cite[A298358]{Sloane}).} \]

\subsubsection*{Eulerian maps} It is well known that rooted bicubic planar maps of $2n$ vertices are in one-to-one correspondence with rooted Eulerian planar maps with $n$ edges. Moreover, as pointed out in \cite{LW04}, the functional equation \eqref{eq:planarMaps} remains valid when restricted to the class of rooted Eulerian planar maps. Thus, if $F(t)$ is as in \eqref{eq:cubicMaps} and $H(t)=\sum h_n t^n$ is the generating function for the nonseparable rooted Eulerian maps with $n$ edges (cf.\ \cite[A069728]{Sloane}), then
\begin{equation} \label{eq:EulerMaps}
  F(t) = H\big(t(1+F(t))^2\big).
\end{equation}
In other words, $f = \Y_{2,0,-1,1}(h)$ and we get the formula
\begin{equation*}
   h_n = \sum_{k=1}^{n} \binom{-2n}{k-1} \frac{(k-1)!}{n!} B_{n,k}(1!f_1, 2!f_2, \dots).
\end{equation*}

\subsection*{Permutations}
Let $S_n$ be the set of permutations on $[n]=\{1,\dots,n\}$, and let $p$ denote the sequence $(n!)_{n\in\mathbb N}$. The inverse of $p$ under the {\sc invert} and {\sc noncrossing partition} transforms, 
\begin{equation}\label{eqn:permTransforms} 
  x_{\textsf{ind}} = \Y_{0,1,-1,1}^{-1}(p) \;\text{ and }\; x_{\textsf{sif}} = \Y_{1,0,-1,1}^{-1}(p), 
\end{equation}
enumerate two known classes of permutations that serve as building blocks for all of the elements of $S_n$. These are sequences A003319 ($x_{\textsf{ind}}$) and A075834 ($x_{\textsf{sif}}$) in \cite{Sloane}. 

\subsubsection*{Stabilized-interval-free permutations}
As introduced by Callan in \cite{Callan04}, a permutation on $[n]$ is said to be stabilized-interval-free (SIF) if it does not stabilize any proper subinterval of $[n]$. The sequence $x_{\textsf{sif}}$ counts the SIF permutations on $[n]$, see \cite[A075834]{Sloane}. 

The fact that $(n!)_{n\in\mathbb N}$ is the {\sc noncrossing partition} transform of $x_{\textsf{sif}}$ was proved and discussed in \cite{Callan04}. As expected, there is a connection to Dyck paths that provides a natural way to visualize the unique decomposition of a permutation on $[n]$ into SIF permutations of length less than or equal to $n$. Specifically, \eqref{eqn:permTransforms} implies that:

\smallskip 
\begin{quote}
The set of permutations on $[n]$ is in one-to-one correspondence with the set of Dyck paths of semilength $n$ such that each $j$-ascent may be colored in as many ways as the number of SIF permutations of length $j$. 
\end{quote}

\subsubsection*{Indecomposable permutations}
The sequence $x_{\textsf{ind}}$ enumerates the class of indecomposable permutations as introduced by Comtet \cite{Comtet72}. This reflects the fact that every permutation on $[n]$ can be split into indecomposable permutations of length less than or equal to $n$, so they play the role of building blocks from which all permutations can be constructed. In fact, every permutation on $[n]$ may be represented as a composition of $n$ whose parts of length $j$ are labeled by indecomposable permutations of length $j$. 

Using this interpretation it is easy to see that, if a pattern $\sigma$ is indecomposable, then every permutation in $\mathsf{Av}(\sigma)$ (set of permutations avoiding the pattern $\sigma$) can be split into $\sigma$-avoiding indecomposable permutations (denoted by $\mathsf{Av}^{\textsf{ind}}(\sigma)$). Thus if $a_{\sigma}=(\mathsf{Av}_n(\sigma))_{n\in\mathbb N}$ and $i_{\sigma}=(\mathsf{Av}_n^{\textsf{ind}}(\sigma))_{n\in\mathbb N}$, then we have (cf.\ \cite[Lem.~3.1]{GKZ16})
\begin{equation} \label{eq:Av_Indecomposable}
  i_{\sigma} = \Y_{0,1,-1,1}^{-1}(a_{\sigma}).
\end{equation}

For example, since the class $\mathsf{Av}(2413)$ is enumerated by \cite[A022558]{Sloane} (cf.\ \cite{Bona97,Stank94}), and since this sequence is the {\sc invert} transform of the right-shifted \cite[A000257]{Sloane}, we conclude 
\begin{equation*}
  \mathsf{Av}_n^{\textsf{ind}}(\sigma) = f_{n-1} = \frac{3(2n-3)!2^{n-1}}{(n-2)!(n+1)!} \;\text{ for } n\ge 2,
\end{equation*}
for every $\sigma\in \{2413, 2431, 3142, 3241, 4132, 4213\}$. In addition, with $f_0=1$ we get
\begin{equation*}
  \mathsf{Av}_n(\sigma) =\sum_{k=1}^{n}\frac{k!}{n!}B_{n,k}(1!f_0,2!f_1,3!f_2,\dots).
\end{equation*}
Similarly, since $\mathsf{Av}(\sigma,\tau)$ for $(\sigma,\tau)\in\{(321, 2341),(321, 3412),(321, 3142)\}$ is enumerated by \cite[A001519]{Sloane}, we can use \eqref{eq:Av_Indecomposable} to obtain 
\begin{equation*}
  \mathsf{Av}_n^{\textsf{ind}}(\sigma,\tau)=2^{n-2} \;\text{ for } n\ge 2. 
\end{equation*}

Finally, if $(\sigma,\tau)$ is any of the following pairs of permutations:
\begin{center}
\begin{tabular}{ccc}
(4321, 4312), & (4312, 4231), & (4312, 4213), \\
(4312, 3412), & (4231, 4213), & (4213, 4132), \\
(4213, 4123), & (4213, 2413), & (3142, 2413),
\end{tabular}
\end{center}
it was shown by Kremer \cite{Kre2000} that $\mathsf{Av}_n(\sigma,\tau)$ is the large Schr\"oder number A006318($n-1$). Hence \eqref{eq:Av_Indecomposable} gives that $\mathsf{Av}_n^{\textsf{ind}}(\sigma,\tau)$ equals the little Schr\"oder number A001003($n-1$).

\subsubsection*{The class $\mathsf{Av}(2413, 3412)$}
{\sc Bell} transformations can be combined with the OEIS \cite{Sloane} to create flows of sequences associated with a particular sequence of interest. This often leads to combinatorial connections that can be verified through the functional equation satisfied by the generating functions. 

Let us illustrate this strategy with the sequence A000257 (number of rooted bicubic maps of $2n$ vertices, number of rooted Eulerian maps with $n$ edges, or $\mathsf{Av}_{n+1}^{\textsf{ind}}(2413)$, for instance). Using {\sc Bell} transformations one discovers the following apparent connections:

\bigskip
\begin{center}
\tikzstyle{block} = [rectangle, draw, fill=gray!8, text width=75pt, text centered, rounded corners]
\tikzstyle{line} = [draw, -latex']
\usetikzlibrary{positioning}
\begin{tikzpicture}
  \node [block] (C) {{\small \color{blue}$f$\,=\,A000257}};
  \node [block, left = 42pt of C] (L) {{\small $g$\,=\,A298358}};
  \node [block, right = 42pt of C] (R) {{\small $h$\,=\,A069728}};
  \node [block, above = 25pt of C] (U) {{\small $\mathsf{Av}{(2413, 3412)}$}};
  \node [block, above = 25pt of U] (V) {{\small $\mathsf{Av}^{\textsf{ind}}{(2413, 3412)}$}};
  \node [block, below = 25pt of C] (D) {{\small $\mathsf{Av}(2413)$}};
  \path [line,-stealth] (L) -- node[above] {\scriptsize $\Y_{3,0,-1,1}$}(C);
  \path [line,-stealth] (R) -- node[above] {\scriptsize $\Y_{2,0,-1,1}$}(C);
  \path [line,-stealth] (V) -- node[right=1pt] {\scriptsize $\Y_{0,1,-1,1}$}(U);
  \path [line,-stealth] (U) -- node[right=1pt] {\scriptsize $\Y_{1,1,-1,1}\circ R$}(C);
  \path [line,-stealth] (C) -- node[right=2pt] {\scriptsize $\Y_{0,1,-1,1}\circ R$}(D);
  \node [below right=9pt and -40pt] at (R) 
  {\parbox{3cm}{\scriptsize Rooted non-separable\\ Eulerian maps with\\ $n$ edges}};
  \node [below right=9pt and -40pt] at (L) 
  {\parbox{3cm}{\scriptsize Rooted 3-connected\\ bicubic maps of $2n$\\ vertices}};
\end{tikzpicture}
\end{center}
\medskip
where $R$ is the right-shift operator $R\circ (x_1,x_2,\dots) = (1,x_1,x_2,\dots)$. 

On the one hand, the identities 
\[ f=\Y_{3,0,-1,1}(g),\;\; f=\Y_{2,0,-1,1}(h), \text{ and } \mathsf{Av}_n(2413)=\Y_{0,1,-1,1}(R(f)) \] 
are indeed true and consistent with the previously discussed building block approach. On the other hand, the connection between $\mathsf{Av}(2413,3412)$ and $f$ is quite surprising. Note that, since $\Y_{1,1,-1,1}^{-1} = \Y_{-1,0,-1,-1}$, the conjectured identity is equivalent to
\begin{equation}\label{eq:patternAvoidBell}
 R\circ \mathsf{Av}{(2413, 3412)} = \Y_{-1,0,-1,-1}(f).
\end{equation}
If $A(t)$ is the generating function for $\mathsf{Av}{(2413, 3412)}$, then the right-shifted sequence has generating function $t\mathcal{A}(t)$, where $\mathcal{A}(t)=1+A(t)$. Hence \eqref{eq:patternAvoidBell} together with Corollary~\ref{cor:gf} gives the functional equation
\begin{equation*} 
  1 + F\big(t(1-t\mathcal{A}(t))\big) = \frac{1}{1-t\mathcal{A}(t)}.
\end{equation*}
Recall that $F(t)$ is the generating function for the sequence A000257. Now, using the known expression
\[ F(t)=\tfrac{1}{32t^2}\big(-1+12t-24t^2+(1-8t)^{3/2}\big), \] 
and letting $\mathcal{F}(t)=1+F(t)$, one can verify the identity
\begin{equation*} 
16t^2\mathcal{F}(t)^2-(8t^2+12t-1)\mathcal{F}(t)+t^2+11t-1 = 0,
\end{equation*}
which leads to the functional equation
\begin{equation*}
 t^4\mathcal{A}(t)^3 + (5t^3 - 11t^2)\mathcal{A}(t)^2 + (3t^2 + 10t - 1)\mathcal{A}(t) - 9t +1= 0.
\end{equation*}

The validity of this equation was very recently proved by Miner and Pantone \cite{MP18}, thus \eqref{eq:patternAvoidBell} holds. As a consequence, we get the formula
\begin{align*}
   a_{n-1} &= \sum_{k=1}^{n} \binom{-n-2}{k-1} \frac{(k-1)!}{n!} B_{n,k}(1!f_1, 2!f_2, \dots) \text{ for } n\ge 2,
\end{align*}
which may be written in terms of binomials using Lemma~\ref{lem:bellA257}.

%%%%%%%%%%%%%%%%%%%%%%%%%%%%%%%%%%%%%%%%%%%%
\section{Concluding remarks} 
\label{sec:remarks}
%%%%%%%%%%%%%%%%%%%%%%%%%%%%%%%%%%%%%%%%%%%%

The goal of this paper is to introduce a family of {\sc Bell} transformations and to discuss their main properties and combinatorial interpretations. We discussed several instances of these transformations and showcased their unifying principle. The predominant feature has been the use of partial Bell polynomials. As we have shown here, they provide a suitable machinery for dealing with problems that involve Lagrange inversion and to problems that lead to recurrence relations of convolution type. 

We have also shown that, given a class of irreducible/primitive combinatorial objects (building blocks), {\sc Bell} transformations offer an efficient tool for the enumeration of ``composite'' objects constructed from the given set of building blocks. This principle has already been illustrated in Section~\ref{sec:applications}, but it is only appropriate to add a brief discussion about the Bell numbers $(B_n)_{n\in\mathbb{N}}$ (see \cite[A000110]{Sloane}). It is well-known that $B_n$ gives the number of set partitions of $[n]$. They can be built from connected partitions (enumerated by A099947 in \cite{Sloane}) through a standard noncrossing insertion procedure, and if $b$ denotes the sequence $(B_n)_{n\in\mathbb{N}}$, then $b=\Y_{1,0,-1,1}(A099947)$. Alternatively, set partitions can also be built from irreducible partitions (enumerated by A074664 in \cite{Sloane}) through concatenation and shifting, and we have $b=\Y_{0,1,-1,1}(A074664)$. Finally, there is the natural connection via the {\sc exp} transform: $(n!B_n) = \Y_{0,0,0,1}(x)$, where $x_n=\frac{1}{n!}$. 

Such connections via {\sc Bell} transformations can be found effortlessly, and very often they provide a source of combinatorial insight. In addition, the representation in terms of partial Bell polynomials allows for efficient counting, in particular if one is interested in restricting the building blocks used for the construction of the combinatorial objects at hand.

While the applications we considered in this paper are purely combinatorial, it is worth pointing out that partial Bell polynomials and functional equations like the ones in Section~\ref{sec:gf} are certainly relevant in other areas of research, most recently in quantum field theory (see e.g.\ \cite{GSW17,KY17}). The few applications we chose to discuss in Section~\ref{sec:applications} are entirely based on our current interests and are by no means a sign of limitations. We hope that our results serve as motivation to explore other aspects of the {\sc Bell} transformations. Algebraically, it would be interesting to look for and interpret eigensequences (in the spirit of \cite{BS95,Coker04}). Furthermore, it is natural to investigate how the asymptotic behavior of $\Y_{a,b,c,d}(x)$ depends on the asymptotic behavior of $x$, and vice versa.

%%%%%%%%%%%%%%%%%%%%%%%%%%%%%%%%%%%%%%%%%%%%
\section{Appendix: Auxiliary results on partial Bell polynomials}
\label{sec:appendix}
%%%%%%%%%%%%%%%%%%%%%%%%%%%%%%%%%%%%%%%%%%%%

The first two lemmas are known results that may be found in Comtet's book \cite[Sec.~3.4 \& Sec.~3.5]{Comtet}.
\begin{lemma}[Fa\`a di Bruno]\label{lem:FaadiBruno}
Let $f$ and $g$ be two formal power series 
\[ f(u) = \sum_{k=0}^\infty f_k \tfrac{u^k}{k!}, \quad g(t) = \sum_{m=1}^\infty g_m \tfrac{t^m}{m!}, \]
and let $h$ be the formal power series of the composition $f\circ g$,
\[ h(t) = \sum_{n=1}^\infty h_n \tfrac{t^n}{n!} = (f\circ g)(t). \]
Then, the coefficients $h_n$ are given by
\[ h_0=f_0, \quad h_n = \sum_{k=1}^n f_k B_{n,k}(g_1,g_2,\dots,g_{n-k+1}) \text{ for } n\ge 1. \]
\end{lemma}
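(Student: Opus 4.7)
The plan is to substitute the series for $g$ into the series for $f$ and then read off the coefficient of $t^n/n!$ using the very definition of the partial Bell polynomials. Since $g$ has no constant term, the substitution produces a well-defined formal power series and, more importantly, the powers $g(t)^k$ are captured exactly by the generating series of $B_{n,k}$.

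Concretely, first I would write
\[
 h(t) = f(g(t)) = \sum_{k=0}^\infty \frac{f_k}{k!}\, g(t)^k,
\]
with the convention that the $k=0$ term contributes $f_0$. Next, starting from the defining series
\[
 \frac{1}{k!}\bigg(\sum_{j=1}^\infty g_j\, \frac{t^j}{j!}\bigg)^{\!k}
 = \sum_{n=k}^\infty B_{n,k}(g_1,g_2,\dots)\,\frac{t^n}{n!},
\]
I would recognize that the inner sum is exactly $g(t)$, so
\[
 \frac{g(t)^k}{k!} = \sum_{n=k}^\infty B_{n,k}(g_1,\dots,g_{n-k+1})\,\frac{t^n}{n!}.
\]
Substituting this back into the expression for $h$ and interchanging the two sums (permissible since we are working in $\mathbb{C}[[t]]$ and for each fixed $n$ only finitely many $k \le n$ contribute) yields
\[
 h(t) = f_0 + \sum_{n=1}^\infty \bigg(\sum_{k=1}^n f_k\, B_{n,k}(g_1,\dots,g_{n-k+1})\bigg)\frac{t^n}{n!}.
\]
Finally, comparing coefficients of $t^n/n!$ with the assumed expansion $h(t)=\sum h_n t^n/n!$ gives the stated formula, together with $h_0 = f_0$.

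There is no genuine obstacle here; the only point requiring a moment of care is the legitimacy of the rearrangement of the double sum, which is immediate from the fact that $B_{n,k}$ vanishes for $k>n$ so that, for each $n$, only the finitely many terms $k=1,\dots,n$ contribute to the coefficient of $t^n$. Everything else is a straightforward application of the generating-series definition of $B_{n,k}$ used throughout the paper.
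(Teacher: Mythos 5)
Your proof is correct and complete. The paper itself does not prove this lemma---it simply cites Comtet's book---so there is no internal argument to compare against; your derivation is the canonical one, obtained by expanding $f(g(t))$ and invoking the generating-series definition of $B_{n,k}$ that the paper adopts in the introduction, with the interchange of summations justified exactly as you say (only the terms $k\le n$ contribute to the coefficient of $t^n$, since $g$ has no constant term and $B_{n,k}=0$ for $k>n$).
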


\begin{lemma}\label{lem:log&power}
The logarithmic polynomials $L_n$, defined by $L_0=1$,
\[  \log(1+g_1t+g_2 \tfrac{t^2}{2!}+\cdots) = \sum_{n=1}^\infty L_n \tfrac{t^n}{n!}, \]
and the potential polynomials $P_n^{(r)}$, defined by $P_0^{(r)}=1$,
\[  \Big(1+g_1t+g_2 \tfrac{t^2}{2!}+\cdots\Big)^r = 1 + \sum_{n=1}^\infty P_n^{(r)} \tfrac{t^n}{n!}, \]
can be written for $n\ge 1$ as
\begin{align*}
 L_n &= \sum_{k=1}^n (-1)^{k-1} (k-1)! B_{n,k}(g_1,g_2,\dots), \\
 P_n^{(r)} &= \sum_{k=1}^n (r)_k B_{n,k}(g_1,g_2,\dots),
\end{align*}
where $(r)_k = \prod_{j=0}^{k-1} (r-j) = \binom{r}{k}k!$. 
\end{lemma}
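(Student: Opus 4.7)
The plan is to derive both formulas by direct application of Faà di Bruno's formula (Lemma~\ref{lem:FaadiBruno}) to the composition $f\circ g$ where $g(t)=g_1 t+g_2 \tfrac{t^2}{2!}+\cdots$ and $f$ is chosen, in each case, to realize the relevant generating function. The key point is that both $\log(1+u)$ and $(1+u)^r$ are familiar analytic functions whose Taylor coefficients at the origin are classical, so the problem reduces to identifying $f_k=f^{(k)}(0)$ correctly and substituting into Faà di Bruno.

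For the logarithmic polynomials, I would take $f(u)=\log(1+u)$. A straightforward induction (or direct differentiation) gives $f_0=0$ and $f_k=(-1)^{k-1}(k-1)!$ for $k\ge 1$. Plugging these into the formula
\[ h_n=\sum_{k=1}^n f_k\,B_{n,k}(g_1,g_2,\dots,g_{n-k+1}) \]
immediately produces
\[ L_n=\sum_{k=1}^n(-1)^{k-1}(k-1)!\,B_{n,k}(g_1,g_2,\dots), \]
as claimed. The only minor subtlety to record is that the constant term of $h$ is $f_0=0$, consistent with the convention $L_0=1$ coming from the normalization of the series.

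For the potential polynomials, I would take $f(u)=(1+u)^r$, whose Taylor coefficients at $0$ are $f_k=r(r-1)\cdots(r-k+1)=(r)_k$ for $k\ge 0$ (with $(r)_0=1$ absorbed into the leading $1$ on the right-hand side). Applying Faà di Bruno again gives
\[ P_n^{(r)}=\sum_{k=1}^n (r)_k\,B_{n,k}(g_1,g_2,\dots), \]
matching the stated identity. The expressions $(r)_k=\binom{r}{k}k!$ simply record the two equivalent ways of writing falling factorials as binomial coefficients times $k!$.

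I do not anticipate a substantive obstacle; both derivations are essentially one-line consequences of Faà di Bruno once the Taylor expansions of $\log(1+u)$ and $(1+u)^r$ are recalled. The only care needed is bookkeeping at $n=0$ and verifying that the sum starts at $k=1$ (because $g$ has no constant term, the $k=0$ contribution to the composition is just $f_0$, which we separate off).
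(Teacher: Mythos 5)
Your derivation is correct; the paper offers no proof of this lemma at all (it simply cites Comtet, Sec.~3.4--3.5), and your argument---reading off the Taylor coefficients $f_k=(-1)^{k-1}(k-1)!$ of $\log(1+u)$ and $f_k=(r)_k$ of $(1+u)^r$ and substituting them into Fa\`a di Bruno's formula (Lemma~\ref{lem:FaadiBruno})---is exactly the standard derivation underlying that reference. The only cosmetic slip is your aside about $L_0$: since $f_0=\log 1=0$, Fa\`a di Bruno gives $h_0=0$, and the value $L_0=1$ in the statement is an independent normalization convention (the displayed series starts at $n=1$), not something your computation is, or needs to be, ``consistent with.''
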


We now recall a result on partial Bell polynomials, which together with the two extensions below (Propositions \ref{prop:lambda_minus1} and \ref{prop:lambda_c-1}), are needed for the proof of Theorem~\ref{thm:lambda_general}.

\begin{lemma}[{\cite[Theorem~15]{BGW12}}] \label{lem:lambda}
Let $\alpha,\beta \in\mathbb{R}$. Given $x=(x_n)_{n\in\mathbb{N}}$, define $y=(y_n)_{n\in\mathbb{N}}$ by
\begin{equation*}
y_n = \sum_{k=1}^n \tbinom{\alpha n+\beta k}{k-1}(k-1)!B_{n,k}(x).
\end{equation*}
Then, for any $\lambda\in\mathbb{C}$, we have
\begin{equation*}
\sum_{k=1}^n \tbinom{\lambda}{k-1}(k-1)!B_{n,k}(y)
=\sum_{k=1}^n\tbinom{\alpha n+\beta k + \lambda}{k-1}(k-1)!B_{n,k}(x).
\end{equation*}
\end{lemma}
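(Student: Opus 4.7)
The plan is to express both sides of the identity as coefficient extractions of a single formal power series, via two applications of Lagrange–Bürmann inversion. Introduce the EGFs $\tilde X(s) = \sum_{n \geq 1} x_n s^n/n!$, $\tilde Y(t) = \sum_{n \geq 1} y_n t^n/n!$, and the auxiliary series $W = W(s)$ defined implicitly by the Bürmann equation $W = \tilde X(s)(1+W)^\beta$.

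Standard Lagrange inversion (treating $\tilde X$ as the Bürmann variable, with $\psi(w) = (1+w)^\beta$) yields $[\tilde X^k](1+W)^\mu = \tfrac{\mu}{k}\binom{\mu-1+\beta k}{k-1}$ for any $\mu$. Combined with the identity $[s^n]\tilde X(s)^k = k!\,B_{n,k}(x)/n!$, this gives
\[ \frac{n!}{\mu}\,[s^n]\bigl(1+W(s)\bigr)^{\mu} \;=\; \sum_{k=1}^{n} \binom{\mu-1+\beta k}{k-1}(k-1)!\,B_{n,k}(x), \]
and setting $\mu = \alpha n + \lambda + 1$ recovers the right-hand side of the identity. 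Next, perform the change of variable $t = s(1+W(s))^{-\alpha}$, i.e.\ $s = t\,(1+W(s))^{\alpha}$, which is a Bürmann equation defining $s(t)$, and set $\tilde Y(t) := W(s(t))$. Taking $\lambda = 0$ (hence $\mu = \alpha n + 1$) and applying Lagrange–Bürmann in the variable $s(t)$ recovers $y_n = n!\,[t^n]\tilde Y(t)$, confirming that $\tilde Y$ is genuinely the EGF of the sequence $y$.

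For the left-hand side, use $\tilde Y^k/k! = \sum_n B_{n,k}(y)\,t^n/n!$ together with the elementary series identity $\sum_{k \geq 1}\binom{\lambda}{k-1} u^k/k = ((1+u)^{\lambda+1}-1)/(\lambda+1)$ (obtained by integrating $(1+v)^\lambda$) to obtain
\[ \sum_{k=1}^n \binom{\lambda}{k-1}(k-1)!\,B_{n,k}(y) \;=\; \frac{n!}{\lambda+1}\,[t^n]\bigl((1+\tilde Y(t))^{\lambda+1}-1\bigr). \]
A second application of Lagrange–Bürmann, to pass from $t$ to $s$ using $s = t(1+W(s))^{\alpha}$, combined with the antiderivative relation $\frac{d}{ds}\frac{(1+W)^{\mu}}{\mu} = W'(s)(1+W)^{\mu-1}$, converts this quantity into $\frac{n!}{\alpha n + \lambda + 1}\,[s^n](1+W(s))^{\alpha n + \lambda + 1}$, matching the expression from the first step.

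The main obstacle is keeping the bookkeeping straight across the two Lagrange inversions — one in the ``time'' variable $\tilde X$ (to expand $(1+W)^\mu$ as a series in $\tilde X$), and one in the variable $s$ (to translate between $t$- and $s$-expansions) — and verifying that the change of variable $t = s(1+W)^{-\alpha}$ is compatible with the defining sum for $y$. Once the auxiliary series $W$ is in place, both sides collapse to the same coefficient of $(1+W(s))^{\alpha n + \lambda + 1}$, and the identity follows.
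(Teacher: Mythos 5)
Your argument is correct, but note that the paper does not actually prove this lemma: it is imported verbatim as Theorem~15 of \cite{BGW12}, and only its two extensions (Propositions~\ref{prop:lambda_minus1} and \ref{prop:lambda_c-1}) are proved here, using Fa\`a di Bruno together with the logarithmic and potential polynomials of Lemma~\ref{lem:log&power}. So there is no in-paper proof to compare against; what you have supplied is a self-contained derivation by double Lagrange--B\"urmann inversion. I checked the key steps and they all hold: $[\tilde X^k](1+W)^\mu=\tfrac{\mu}{k}\tbinom{\mu-1+\beta k}{k-1}$ follows from Lagrange inversion applied to $W=\tilde X\,(1+W)^\beta$; combined with $[s^n]\tilde X^k=k!B_{n,k}(x)/n!$ this gives $n!\,[s^n](1+W)^\mu=\mu\sum_k\tbinom{\mu-1+\beta k}{k-1}(k-1)!B_{n,k}(x)$, whose specializations $\mu=\alpha n+1$ and $\mu=\alpha n+\lambda+1$ produce $y_n$ and the right-hand side; the substitution $s=t(1+W(s))^\alpha$ together with $W'(1+W)^{\mu-1}=\tfrac{d}{ds}\tfrac{(1+W)^\mu}{\mu}$ correctly identifies $\tilde Y(t)=W(s(t))$; and $\tbinom{\lambda}{k-1}\tfrac1k=\tfrac{1}{\lambda+1}\tbinom{\lambda+1}{k}$ turns the left-hand side into $\tfrac{n!}{\lambda+1}[t^n]\bigl((1+\tilde Y)^{\lambda+1}-1\bigr)$, which the second inversion converts to the same coefficient $\tfrac{n!}{\alpha n+\lambda+1}[s^n](1+W)^{\alpha n+\lambda+1}$. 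The only point worth adding explicitly is the treatment of the degenerate parameter values where your intermediate formulas divide by zero ($\lambda=-1$, $\alpha n+\lambda+1=0$, or $\alpha n+1=0$): since for fixed $n$ both sides of the asserted identity are polynomials in $\lambda$, it suffices to prove it for all but finitely many $\lambda$, and the undivided form $n!\,[s^n](1+W)^\mu=\mu\sum_k(\cdots)$ avoids the issue for the definition of $y_n$. This is the same style of argument that underlies the paper's proofs of the two companion propositions (which also pass through the auxiliary sequence $z$ and the series $Z(t)$), so your route is a natural and arguably more direct substitute for the external citation.
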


\begin{proposition}\label{prop:lambda_minus1}
Let $\alpha,\beta\in\mathbb{R}$. Given $x=(x_n)_{n\in\mathbb{N}}$, define $y=(y_n)_{n\in\mathbb{N}}$ by
\begin{equation*}
y_n = \sum_{k=1}^n \tbinom{\alpha n+\beta k-1}{k-1}(k-1)!B_{n,k}(x).
\end{equation*}
Then, for any $\lambda\in\mathbb{C}$, we have
\begin{equation}\label{eq:lambda_minus1}
\sum_{k=1}^n \lambda^{k-1} B_{n,k}(y) 
 =\sum_{k=1}^n\tbinom{\alpha n+\beta k-1 + \lambda}{k-1}(k-1)!B_{n,k}(x).
\end{equation}
\end{proposition}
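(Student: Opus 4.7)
The plan is to derive Proposition \ref{prop:lambda_minus1} from two applications of Lemma \ref{lem:lambda}, glued together by a short generating-function computation that invokes Lemma \ref{lem:log&power}. I first introduce the auxiliary sequence
\[ y^*_n := \sum_{k=1}^n \binom{\alpha n+\beta k}{k-1}(k-1)!\, B_{n,k}(x), \]
which is the image of $x$ under the transform considered in Lemma \ref{lem:lambda}. Specializing that lemma to $\lambda=-1$ (so that $\binom{-1}{k-1}(k-1)!=(-1)^{k-1}(k-1)!$), its right-hand side becomes exactly the defining expression for $y_n$ in the proposition, while its left-hand side is the logarithmic polynomial $L_n(y^*)$ from Lemma \ref{lem:log&power}. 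This yields the key \emph{logarithmic link} $y_n=L_n(y^*)$, or equivalently $1+Y^*(t)=e^{Y(t)}$ for the exponential generating functions $Y(t)=\sum y_n t^n/n!$ and $Y^*(t)=\sum y^*_n t^n/n!$.

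Next, applying Lemma \ref{lem:lambda} a second time with $\lambda$ replaced by $\lambda-1$, the right-hand side of \eqref{eq:lambda_minus1} rewrites as
\[ \sum_{k=1}^n \binom{\alpha n+\beta k-1+\lambda}{k-1}(k-1)!\, B_{n,k}(x) = \sum_{k=1}^n \binom{\lambda-1}{k-1}(k-1)!\, B_{n,k}(y^*). \]
It therefore suffices to verify the identity $\sum_k \lambda^{k-1} B_{n,k}(y) = \sum_k \binom{\lambda-1}{k-1}(k-1)!\, B_{n,k}(y^*)$, which I will do at the level of exponential generating functions in $t$.

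By interchanging summation and using $\sum_{n\ge k}B_{n,k}(y)t^n/n!=Y(t)^k/k!$, the left-hand side has EGF $\tfrac{1}{\lambda}\bigl(e^{\lambda Y(t)}-1\bigr)$. Using $\binom{\lambda-1}{k-1}(k-1)!=(\lambda)_k/\lambda$ together with the potential-polynomial form in Lemma \ref{lem:log&power}, the right-hand side has EGF $\tfrac{1}{\lambda}\bigl((1+Y^*(t))^\lambda-1\bigr)$. These agree thanks to $e^{Y(t)}=1+Y^*(t)$, giving \eqref{eq:lambda_minus1} for $\lambda\ne 0$; the case $\lambda=0$ follows since both sides are polynomials in $\lambda$ of degree at most $n-1$ (and in fact both reduce to $y_n$ at $\lambda=0$). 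No serious obstacle arises once the logarithmic link is spotted; the main care is the bookkeeping that converts the sums over $k$ into closed-form generating functions via Lemma \ref{lem:log&power}.
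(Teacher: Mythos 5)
Your proposal is correct and follows essentially the same route as the paper: the same auxiliary sequence (your $y^*$ is the paper's $z$), the same two specializations of Lemma~\ref{lem:lambda} (at $-1$ and at $\lambda-1$), and the same logarithmic/potential-polynomial link $1+Y^*(t)=e^{Y(t)}$ from Lemma~\ref{lem:log&power}; your direct EGF computation of $\sum_k\lambda^{k-1}B_{n,k}(y)$ is just the paper's Fa\`a di Bruno step with $f(t)=e^{\lambda t}$ written out. Your explicit handling of $\lambda=0$ by polynomiality is a small point of added care over the paper, which divides by $\lambda$ implicitly.
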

\begin{proof}
Let $f(t)=1+\sum_{n=1}^\infty \lambda^n \frac{t^n}{n!}$ and $g(t)=\sum_{n=1}^\infty y_n \frac{t^n}{n!}$. Then, Lemma~\ref{lem:FaadiBruno} implies
\begin{equation*}
 (f\circ g)(t)=1+\sum_{n=1}^\infty h_n \tfrac{t^n}{n!} \;\text{ with } h_n=\sum_{k=1}^n \lambda^{k} B_{n,k}(y).
\end{equation*}
On the other hand, if we let $z=(z_1,z_2,\dots)$ with
\begin{equation}\label{eq:zn}
 z_n = \sum_{k=1}^n \tbinom{\alpha n+\beta k}{k-1}(k-1)!B_{n,k}(x),
\end{equation}
Lemma~\ref{lem:lambda} implies
\begin{align} \label{eq:z(lambda-1)}
 \sum_{k=1}^n\tbinom{\lambda-1}{k-1}(k-1)!B_{n,k}(z)
 &= \sum_{k=1}^n\tbinom{\alpha n+\beta k + \lambda -1}{k-1}(k-1)!B_{n,k}(x), \\ \label{eq:z(-1)}
 \sum_{k=1}^n\tbinom{-1}{k-1}(k-1)!B_{n,k}(z)
 &= \sum_{k=1}^n\tbinom{\alpha n+\beta k -1}{k-1}(k-1)!B_{n,k}(x).
\end{align}
Note that the left-hand side of \eqref{eq:z(-1)} is the logarithmic polynomial $L_n(z)$. Thus, if we let $Z(t)=1+\sum_{n=1}^\infty z_n \frac{t^n}{n!}$, \eqref{eq:z(-1)} implies $\log(Z(t)) = g(t)$. Hence $(f\circ g)(t) = e^{\lambda g(t)}=Z(t)^\lambda$. 

On the other hand, by Lemma~\ref{lem:log&power}, we have
\begin{equation*}
 \big[\tfrac{t^n}{n!}\big] Z(t)^\lambda = \sum_{k=1}^n (\lambda)_k B_{n,k}(z) 
 = \sum_{k=1}^n \lambda\tbinom{\lambda-1}{k-1}(k-1)!B_{n,k}(z).
\end{equation*}
Therefore, we conclude
\begin{equation*}
 \sum_{k=1}^n \lambda^{k} B_{n,k}(y) = \sum_{k=1}^n \lambda\tbinom{\lambda-1}{k-1}(k-1)!B_{n,k}(z).
\end{equation*}
Combining this identity with \eqref{eq:z(lambda-1)}, we obtain \eqref{eq:lambda_minus1}.
\end{proof}

\begin{proposition} \label{prop:lambda_c-1}
Let $\alpha,\beta,\gamma \in\mathbb{R}$, $\gamma\not=0$. Given $x=(x_n)_{n\in\mathbb{N}}$, define $y=(y_n)_{n\in\mathbb{N}}$ by
\begin{equation*}
y_n = \sum_{k=1}^n \tbinom{\alpha n+\beta k+\gamma-1}{k-1}(k-1)!B_{n,k}(x).
\end{equation*}
Then, for any $\lambda\in\mathbb{C}$, we have
\begin{equation}\label{eq:lambda_c-1}
\sum_{k=1}^n \gamma^{k-1} \tbinom{\lambda/\gamma}{k-1}(k-1)!B_{n,k}(y)
=\sum_{k=1}^n\tbinom{\alpha n+\beta k+\gamma-1 + \lambda}{k-1}(k-1)!B_{n,k}(x).
\end{equation}
\end{proposition}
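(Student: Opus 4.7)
The plan is to mirror the proof of Proposition~\ref{prop:lambda_minus1}, introducing an intermediate sequence $z$, expressing both sides of~\eqref{eq:lambda_c-1} in terms of a single power series $Z(t)$, and reading off the identity from a binomial expansion. The r\^ole played in Proposition~\ref{prop:lambda_minus1} by the exponential/logarithm pair (the ``$\gamma=0$'' case) is now played by the power functions $u\mapsto u^\gamma$ and $u\mapsto u^{\gamma+\lambda}$.

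First I would introduce the auxiliary sequence
\[
z_n = \sum_{k=1}^n \tbinom{\alpha n + \beta k}{k-1}(k-1)!\,B_{n,k}(x),
\]
which is exactly the $y$ of Lemma~\ref{lem:lambda}. Two applications of that lemma, first with parameter $\gamma-1$ and then with parameter $\gamma+\lambda-1$ in place of $\lambda$, give
\begin{align*}
y_n &= \sum_{k=1}^n \tbinom{\gamma-1}{k-1}(k-1)!\,B_{n,k}(z),\\
\sum_{k=1}^n \tbinom{\gamma+\lambda-1}{k-1}(k-1)!\,B_{n,k}(z) &= \sum_{k=1}^n \tbinom{\alpha n+\beta k+\gamma-1+\lambda}{k-1}(k-1)!\,B_{n,k}(x),
\end{align*}
so that the right-hand side of~\eqref{eq:lambda_c-1} is recognised as a sum in the variables $z$.

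Setting $Z(t) = 1 + \sum_{n\geq 1} z_n t^n/n!$ and $Y(t) = \sum_{n\geq 1} y_n t^n/n!$, the identity $(r)_k = r\,(r-1)_{k-1}$ combined with Lemma~\ref{lem:log&power} gives, for every $r\neq 0$,
\[
\sum_{k=1}^n \tbinom{r-1}{k-1}(k-1)!\,B_{n,k}(z) = \frac{P_n^{(r)}(z)}{r} = \frac{n!}{r}[t^n]\bigl(Z(t)^r-1\bigr).
\]
With $r=\gamma$ this produces the crucial relation $1+\gamma Y(t)=Z(t)^\gamma$, and with $r=\gamma+\lambda$ it rewrites the right-hand side of~\eqref{eq:lambda_c-1} as $n![t^n]\bigl(Z(t)^{\gamma+\lambda}-1\bigr)/(\gamma+\lambda)$. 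The proposition thus reduces to showing that the left-hand side of~\eqref{eq:lambda_c-1} equals the same quantity. Since $Z(t)^{\gamma+\lambda}=(1+\gamma Y(t))^{(\gamma+\lambda)/\gamma}$, a direct expansion of the generalised binomial series reduces this in turn to the elementary identity
\[
\frac{(1+\gamma u)^{(\gamma+\lambda)/\gamma}-1}{\gamma+\lambda}=\sum_{k\geq 1}\frac{\gamma^{k-1}\tbinom{\lambda/\gamma}{k-1}(k-1)!}{k!}\,u^k,
\]
after which Fa\`a di Bruno (Lemma~\ref{lem:FaadiBruno}) applied to this power series composed with $Y(t)$ produces exactly $\sum_k \gamma^{k-1}\tbinom{\lambda/\gamma}{k-1}(k-1)!\,B_{n,k}(y)$, which is the left-hand side of~\eqref{eq:lambda_c-1}.

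The main obstacle is the bookkeeping for the last binomial expansion together with the apparent division by $\gamma+\lambda$. The latter is harmless because both sides of~\eqref{eq:lambda_c-1} are manifestly polynomials in $\lambda$ of degree at most $n-1$: the factor $\gamma+\lambda$ is common to $P_n^{(\gamma+\lambda)}(z)$ on the right and to the leading factor $1+\lambda/\gamma$ appearing in $\binom{(\gamma+\lambda)/\gamma}{k}$ on the left, so the identity holds identically in $\lambda$ and the exceptional value $\lambda=-\gamma$ is covered by polynomial continuation.
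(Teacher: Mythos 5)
Your proposal is correct and follows essentially the same route as the paper's proof: the same intermediate sequence $z$ from Lemma~\ref{lem:lambda}, the potential-polynomial identification $Z(t)^{\gamma}=1+\gamma Y(t)$, and Fa\`a di Bruno applied to a binomial-series composition to produce the left-hand side (the paper composes $(1+t)^{(\lambda+\gamma)/\gamma}$ with $\gamma Y(t)$ and invokes homogeneity, while you fold the factors $\gamma^{k}$ and $1/(\gamma+\lambda)$ into the outer series, which is only a cosmetic difference). Your explicit polynomial-continuation argument for $\lambda=-\gamma$ is a small but welcome refinement of a step the paper leaves implicit.
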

\begin{proof}
Let 
\[ f(t) = (1+t)^{\frac{\lambda+\gamma}{\gamma}} 
  = 1+\sum_{n=1}^\infty \tbinom{(\lambda+\gamma)/\gamma}{n}n!\frac{t^n}{n!} \] 
and let $g(t)=\sum_{n=1}^\infty \gamma y_n \frac{t^n}{n!}$. Then, by Lemma~\ref{lem:FaadiBruno}, we have
\begin{equation*}
 (f\circ g)(t) = 1+\sum_{n=1}^\infty h_n \tfrac{t^n}{n!} \;\text{ with } h_n
       = \sum_{k=1}^n \tbinom{(\lambda+\gamma)/\gamma}{k}k! B_{n,k}(\gamma y).
\end{equation*}
On the other hand, with $z=(z_n)$ defined as in \eqref{eq:zn}, Lemma~\ref{lem:lambda} implies
\begin{align} \label{eq:z(lambda+c-1)}
 \sum_{k=1}^n\tbinom{\lambda+\gamma-1}{k-1}(k-1)!B_{n,k}(z)
 &= \sum_{k=1}^n\tbinom{\alpha n+\beta k + \lambda+\gamma-1}{k-1}(k-1)!B_{n,k}(x), \\ \label{eq:z(c-1)}
 \sum_{k=1}^n \gamma\tbinom{\gamma-1}{k-1}(k-1)!B_{n,k}(z)
 &= \gamma\sum_{k=1}^n \tbinom{\alpha n+\beta k + \gamma-1}{k-1}(k-1)!B_{n,k}(x).
\end{align}
Since the left-hand side of \eqref{eq:z(c-1)} is the potential polynomial $P^{(\gamma)}_n(z)$, this identity implies $Z(t)^{\gamma}-1 = g(t)$. Finally, since $f(t)=(1+t)^{\frac{\lambda+\gamma}{\gamma}}$, we obtain $(f\circ g)(t) = Z(t)^{\lambda+\gamma}$. Thus
\begin{equation*}
 \sum_{k=1}^n \tbinom{(\lambda+\gamma)/\gamma}{k}k! B_{n,k}(\gamma y) 
 = \sum_{k=1}^n (\lambda+\gamma)_k B_{n,k}(z),
\end{equation*}
which is equivalent to
\begin{equation*}
 \tfrac{\lambda+\gamma}{\gamma}\sum_{k=1}^n \tbinom{\lambda/\gamma}{k-1}(k-1)! B_{n,k}(\gamma y)
 = (\lambda+\gamma)\sum_{k=1}^n \tbinom{\lambda+\gamma-1}{k-1}(k-1)! B_{n,k}(z).
\end{equation*}
Using \eqref{eq:z(lambda+c-1)}, and since $B_{n,k}(\gamma y)=\gamma^kB_{n,k}(y)$, we get \eqref{eq:lambda_c-1}.
\end{proof}

\begin{lemma} \label{lem:bellA257}
Let $(f_j)_{j\in\mathbb{N}}$ be the sequence given by $f_j = \frac{3(2j-1)!2^j}{(j-1)!(j+2)!}$ for $j\ge 1$. For $n,k\ge 1$, we have
\begin{align*}
\tfrac{k!}{n!}B_{n,k}&(1! f_1,2! f_2, 3! f_3,\dots) \\
  &=2^{n+1}\tbinom{2n-1}{n-k}\tfrac{k}{n+k}
  +2^{n+1-2k}\tfrac{k}{n}\sum_{i=0}^k(-1)^i\tbinom{2n+2i-1}{n-1}\tbinom{k-1}{k-i}\\
  &\;\;\;+\sum_{j=1}^{n-1}\sum_{i=1}^{k-1}\sum_{\ell=0}^i(-1)^\ell2^{n+1-2i}\tbinom{k}{i}\tbinom{i-1}{i-\ell}\tbinom{2j+2\ell}{j}\tbinom{2n-2j-1}{n-j-k+i}\tfrac{i(k-i)}{(j+\ell)(n-j+k-i)}.
\end{align*}
\end{lemma}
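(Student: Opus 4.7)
The plan is to translate the claim into a coefficient-extraction problem for an ordinary generating function and then perform that extraction. The defining series identity for partial Bell polynomials gives
\[
\frac{k!}{n!}\,B_{n,k}(1!f_1,2!f_2,\ldots)\;=\;[t^n]\,F(t)^k,
\]
where $F(t)=\sum_{n\ge 1}f_n t^n$ is the ordinary generating function for the sequence A000257. The explicit closed form $F(t)=\tfrac{1}{32t^2}\bigl(-1+12t-24t^2+(1-8t)^{3/2}\bigr)$ recorded below \eqref{eq:cubicMaps} is the natural starting point.

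A convenient reformulation is obtained by setting $s=\sqrt{1-8t}$ and $u=(1-s)/(1+s)$, which yields $t=u/(2(1+u)^2)$ and $F(t)=u(2-u)/4$. Thus $u=u(t)$ satisfies the Catalan-type equation $u=2t(1+u)^2$ with $u(0)=0$. Applying Lagrange inversion with $\phi(u)=2(1+u)^2$ to $H(u)=u^k(2-u)^k/4^k$, a short calculation using $H'(u)=2k(1-u)u^{k-1}(2-u)^{k-1}/4^k$ gives
\[
[t^n]F(t)^k \;=\;\frac{k\cdot 2^{n+1}}{n\cdot 4^k}\,[u^{n-k}]\,(1-u)(2-u)^{k-1}(1+u)^{2n},
\]
reducing the problem to a finite coefficient extraction in the auxiliary variable $u$.

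The remaining step is to cast this coefficient into the three-term shape of the statement. Expanding $(2-u)^{k-1}$ by the binomial theorem yields an $i$-indexed sum of coefficients of $(1-u)(1+u)^{2n}$; the first summand of the claim then corresponds to the extremal contribution and is identified via the ballot identity $\tfrac{k}{n+k}\binom{2n-1}{n-k}=\tfrac{k}{2n}\binom{2n}{n-k}$. The second summand collects the portion of the expansion in which $(1+u)^{2n}$ is kept intact, producing a single sum in $i$ with coefficient $\binom{2n+2i-1}{n-1}$ (obtained after rewriting an expression in $\binom{2n}{\bullet}$-form using the half-integer binomial identity $\binom{1/2}{m}=(-1)^{m-1}(2m-2)!/(2^{2m-1}(m-1)!\,m!)$). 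The third, triple-sum, term arises from a convolution-style splitting $(1+u)^{2n}=(1+u)^{2j+2\ell}(1+u)^{2(n-j-\ell)}$ followed by Vandermonde-type coefficient extraction, which produces the Catalan-adjacent factors $\binom{2j+2\ell}{j}$ and $\binom{2n-2j-1}{n-j-k+i}$ together with the rational weights $\tfrac{i(k-i)}{(j+\ell)(n-j+k-i)}$.

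The main obstacle is this last reshuffling: turning a clean one-parameter Lagrange-inversion sum into the precise triple-sum form of the statement requires making the correct choice of binomial splitting and invoking several ballot-type identities to produce the denominators $n+k$, $j+\ell$, and $n-j+k-i$ appearing in the claim. An alternative route to the same identity is to expand $F^k=(32t^2)^{-k}\sum_{j}\binom{k}{j}(-1+12t-24t^2)^{k-j}(1-8t)^{3j/2}$ directly and split according to the parity of $j$; this route produces the central-binomial numerators of the statement more transparently but requires a delicate cancellation of the formally negative-power terms $t^{-1},\ldots,t^{-2k}$, which again reduces to the same family of ballot-type simplifications.
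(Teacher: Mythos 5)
The paper states this lemma in the appendix without any proof, so there is no in-paper argument to compare against; I can only assess your proposal on its own terms. Your setup is sound and checks out: the reduction $\tfrac{k!}{n!}B_{n,k}(1!f_1,2!f_2,\dots)=[t^n]F(t)^k$ is immediate from the defining series of $B_{n,k}$, the substitution $u=(1-\sqrt{1-8t})/(1+\sqrt{1-8t})$ does give $t=u/(2(1+u)^2)$ and $F(t)=u(2-u)/4$ (I verified this against the radical form of $F$), and Lagrange inversion with $\phi(u)=2(1+u)^2$ correctly yields
\[
[t^n]F(t)^k=\frac{k\,2^{n+1}}{n\,4^{k}}\,[u^{n-k}]\,(1-u)(2-u)^{k-1}(1+u)^{2n}
=\frac{k\,2^{n+1}}{n\,4^{k}}\sum_{i=0}^{k-1}(-1)^i 2^{k-1-i}\binom{k-1}{i}\left[\binom{2n}{n-k-i}-\binom{2n}{n-k-i-1}\right].
\]

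The genuine gap is the step you yourself flag as ``the main obstacle'': you never establish that this clean single sum equals the specific three-term expression of the statement, and the heuristic identifications you offer do not hold up under inspection. The first summand of the claim equals $\tfrac{k}{2n}2^{n+1}\binom{2n}{n-k}=\tfrac{k2^{n}}{n}\binom{2n}{n-k}$, which differs by a factor of $2^{k}$ from the $i=0$ contribution $\tfrac{k2^{n-k}}{n}\binom{2n}{n-k}$ of your expansion (already for $n=k=1$ your expansion gives the single value $1$ while the claim splits it as $2-1$), so it is not an ``extremal contribution'' of your sum. The binomial $\binom{2n+2i-1}{n-1}$ in the second summand has top index exceeding $2n$ and therefore cannot be read off as a coefficient of $(1+u)^{2n}$; producing it requires exactly the half-integer/negative-exponent manipulations you defer. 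And the splitting $(1+u)^{2n}=(1+u)^{2j+2\ell}(1+u)^{2(n-j-\ell)}$ would yield $\binom{2n-2j-2\ell}{\,\cdot\,}$, not the $\binom{2n-2j-1}{n-j-k+i}$ appearing in the triple sum, nor is there any visible source for the rational weights $\tfrac{i(k-i)}{(j+\ell)(n-j+k-i)}$. In effect you have proved a \emph{different} (and simpler) closed form for $\tfrac{k!}{n!}B_{n,k}(1!f_1,2!f_2,\dots)$ and reduced the lemma to the unproven binomial identity equating your expression with the stated one; that identity is the entire content of the lemma and must be supplied, either by carrying out one of your two sketched routes in full or by an independent derivation (e.g., a convolution decomposition of $f_j$ matching the $\binom{k}{i}$, $j+(n-j)$, $i+(k-i)$ structure of the triple sum).
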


%%%%%%%%%%%%%%%%%%%%%%%%%%%%%%%%%%%%%%%%%%%%
%%%%%%%%%%%%%%%%%%%%%%%%%%%%%%%%%%%%%%%%%%%%

\end{document}